\newtheorem{thm}{Theorem}
\newtheorem{asm}{Assumption}
\DeclareMathOperator*{\argmin}{arg\,min}
\numberwithin{theorem}{section}
\begin{document}

\title{A Sigmoidal Approximation for Chance-Constrained Nonlinear Programs\thanks {This work was partially supported by the U.S. Department of Energy grant DE-SC0014114.}}

\author{Yankai Cao       \and
        Victor M. Zavala  }
\institute{Yankai Cao \at
              Department of Chemical and Biological Engineering, University of British Columbia, 2360 East Mall, Vancouver, BC, V6T1Z3, Canada. \\
              \email{yankai.cao@ubc.ca}           \\
           \and
           Victor M. Zavala \at
           Department of Chemical and Biological Engineering, University of Wisconsin-Madison, 1415 Engineering Dr, Madison, WI 53706, USA.\\
           \email{victor.zavala@wisc.edu}
}

%\authorrunning{Short form of author list} % if too long for running head

\date{Received: date / Accepted: date}
% The correct dates will be entered by the editor

\maketitle

\begin{abstract}
We propose a sigmoidal approximation for the value-at-risk (that we call SigVaR)  and we use this approximation to tackle nonlinear programs (NLPs) with chance constraints. We prove that the approximation is conservative and that the level of conservatism can be made arbitrarily small for limiting parameter values. The SigVar approximation brings scalability benefits over exact mixed-integer reformulations because its sample average approximation can be cast as a standard NLP.  We also establish explicit connections between SigVaR and other smooth sigmoidal approximations recently reported in the literature.  We show that a key benefit of SigVaR over such approximations is that one can establish an explicit connection with the conditional value at risk (CVaR) approximation and exploit this connection to obtain initial guesses for the approximation parameters.  We present small- and large-scale numerical studies to illustrate the developments.  
\keywords{Nonlinear optimization \and Chance constraints \and Large-scale  \and Approximation}
%\subclass{90C15 \and 90C30 \and 90C55}
\end{abstract}

\section{Problem Definition and Setting}
We study the chance-constrained nonlinear program (CC-P):
\begin{subequations}
\label{ccprob}
\begin{align}
\min \limits_{x \in  \mathcal{X} } \;\;  &\varphi(x)  \label{ccprob1} \\
{\rm s.t.}  \;\; &\mathbb{P}\left(f(x,\Xi)\leq 0\right) \geq 1- \alpha. \label{ccprob2}
\end{align}
\end{subequations}
Here, $x \in \mathbb{R}^n$ are decision variables and the objective function $\varphi: \mathbb{R}^n {\rightarrow} \mathbb{R}$ is twice continuously differentiable and potentially nonconvex. The set $\mathcal{X}:=\{\,x\,|g(x)\geq 0\}$ is assumed to be compact and non-empty and is comprised of twice differentiable and potentially nonconvex  constraints $g:\mathbb{R}^n\to \mathbb{R}^m$. We consider the probability space $(\Omega,\mathcal{F},\mathbb{P})$ and we assume that $\Omega$ is a measurable space equipped with $\sigma$-algebra $\mathcal{F}$ of subsets of $\Omega$, and that $\bar{\Xi}$ is a linear space of $\mathcal{F}$-measurable functions  $\Xi:\Omega \to \mathbb{R}^d$ (random variables). The probability measure function is given by $\mathbb{P}:\mathcal{F}\to [0,1]$ and we use $\xi\in \mathbb{R}^d$ to denote realizations of $\Xi$. The scalar constraint function $f: \mathbb{R}^n \times\bar{\Xi}{\rightarrow} \mathbb{R}$ is also assumed to be twice continuously differentiable and potentially nonconvex. We define the scalar random variable $Z:=f(x,\Xi)$ with realizations $z\in\mathbb{R}$. When appropriate, we use the notation $Z(x)$ to highlight the dependence of the random variable $Z$ on the decision $x$. We use $\mathbb{P}(Z \in D)$ to denote the probability of the event $Z\in D$ and $F_Z(t) =\mathbb{P}(Z \leq t)$ to denote the cumulative distribution functions of $Z$.%We make the blanket assumption that $\Xi$ and $Z(x)$ (for all $x\in\mathcal{X}$) are continuous random variables. We highlight special considerations for discrete random variables when appropriate. 

The CC \eqref{ccprob2} requires that the event $\{f(x,\Xi)\in (-\infty,0]\}$ occurs with probability of at least  $1- \alpha$, where $\alpha \in (0,1]$.  Since $\mathbb{P}\left(Z\leq 0\right)=F_{Z}(0)$, the CC can also be written as $F_{f(x,\Xi)}(0) \geq 1- \alpha$ or $1-F_{f(x,\Xi)}(0)\leq \alpha$. We recall that the $(1-\alpha)$-quantile of $Z$ is $Q_Z(1-\alpha)=\text{VaR}_{1-\alpha}(Z):=\min\{t\in \mathbb{R} : F_{Z}(t)\geq 1-\alpha\}$ (where VaR is known as the value-at-risk). Consequently, the CC can also be written as  $\text{VaR}_{1-\alpha}(f(x,\Xi))\leq 0$. Another important observation is that $\mathbb{E}[1_D(Z)]=\mathbb{P}(Z\in D)$ holds, where $1_D:\mathbb{R}\to \{0,1\}$ denotes the indicator function of set $D$ (i.e., $1_D(Z)=1$ if $Z \in D$ and $1_D(Z)=0$ if $Z \notin D$). Consequently, \eqref{ccprob2} can be written as $\mathbb{E}[1_{(-\infty,0]}(f(x,\Xi))]\geq 1-\alpha$ or, equivalently, as $\mathbb{E}[1_{(0,\infty)}(f(x,\Xi))]\leq \alpha$. We define the feasible set of CC-P as $\mathcal{X}(\alpha):=\mathcal{X}\cap \mathcal{P}(\alpha)$, where $\mathcal{P}(\alpha):=\{\,x\,|\mathbb{P}\left(f(x,\Xi)\leq 0\right) \geq 1- \alpha\}$ and we assume $\mathcal{X}(\alpha)$ to be compact for all $\alpha\in (0,1]$. We denote an optimal solution and objective value of \eqref{ccprob} as $x^*(\alpha)$ and $\varphi^*(\alpha)$, respectively. We also denote the solution set as $S^*(\alpha)$. We focus our attention on NLPs with a single CC but the concepts discussed can also be applied to multiple CCs of the form $\mathbb{P}\left(f_i(x,\Xi)\leq 0\right) \geq 1- \alpha_i,\, i=1,...,r$.  

A distinguishing and challenging feature of CC-P is that it cannot be solved exactly (except for certain simplified settings). Settings that admit exact solutions include those in which the quantile $Q_{f(x,\Xi)}(1-\alpha)$ can be expressed in algebraic form (e.g., the constraint function $f(\cdot,\cdot)$ is linear in both arguments and the random data vector is Gaussian \cite{bienstock}) or cases in which the cumulative distribution $F_{f(x,\Xi)}(\cdot)$ and its derivatives can be computed explicitly \cite{henrion}. Exact reformulations to its sample average approximation (SAA) with integer variables, originally proposed in \cite{luedtkeahmed}, use an indicator function representation of the CC. Unfortunately, in the context of CC-P, the integer reformulation would lead to large-scale and nonconvex mixed-integer nonlinear programs (MINLPs). Conservative and computationally more tractable approximations of CC-P can be used to avoid the need for solving MINLPs.  A conservative approximation can be obtained by using the so-called scenario-based approach \cite{campi,calafiore2006scenario,nemirovski2006scenario}. In this approach, we solve a stochastic NLP that enforces $f(x,\Xi)\leq 0$ with probability one (almost surely). Such an approach leads to structured NLPs, which can in turn be solved using parallel interior-point solvers \cite{zavalalaird}.  A drawback of the scenario approach is that it can be overly conservative and does not offer direct control on the probability level of CC.  Alternative conservative approximations include the conditional value-at-risk (CVaR) approximation and the Bernstein approximation, which use convex approximations of the indicator function \cite{nemirovski2006convex}. The level of conservatism of both CVaR approximation and Bernstein approximation might be reduced, but not eliminated using the ``tuning methods" \cite{nemirovski2006convex}. The authors in \cite{hong2011sequential,shan2014smoothing} propose a difference of convex functions (DC) approximation for the indicator function and they show that the approximation can be made equivalent to CC-P for limiting parameters. This approach involves a difference of non-smooth max functions that cannot be handled with standard NLP modeling and solution tools. Instead, this approach requires specialized solution algorithms that are not guaranteed to work in a general nonconvex NLP setting. 

The authors in \cite{geletu2017inner,geletu2015tractable} propose a smooth sigmoidal (SS) approximation for the indicator function. The solutions of this approximation are shown to be conservative and converge to the solutions of CC-P for limiting parameter values. This approach has the practical advantage that its sample average approximation can be handled using standard NLP tools. Unfortunately, no guidelines have been provided to select suitable approximation parameters. This is important, because, when the parameter values are far away from their limiting values, the approximation can be very conservative and lead to infeasible problems. On the other hand, when the parameter values are too close to their limiting values, the sigmoidal approximation becomes difficult to handle numerically. 

In this work, we propose a tailored sigmoidal approximation to outer-approximate the indicator function.  We use this sigmoidal function to construct a risk measure, that we call SigVaR, and show that this is a conservative approximation of the value at risk (VaR). We prove that the SigVaR approximation is always conservative and that it converges to CC-P for limiting parameter values.  As with SS, a benefit of the SigVaR approximation is that it can be handled by using standard NLP solvers, thus offering parallel solution capabilities. We establish explicit relationships between the parameters of SigVaR with those of CVaR. This allows us to establish parameter values that guarantee that the approximation is as conservative as CVaR.  Specifically, we show that we can directly relate the parameters of the SigVaR approximation to the value-at-risk (VaR) identified with CVaR. This connection provides a mechanism to obtain an initial feasible solution and an initial guess for the parameter values.  We also establish explicit connections between the parameters of SigVaR and those of DC and SS approximations. As with SS, a drawback of SigVaR is that numerical stability is encountered as the approximation approaches the indicator function. To improve this issue, we propose a scheme that solves a sequence of conservative approximations of increasing quality. Another drawback of SigVaR is that solving SigVaR to global optimality is computationally intractable if the dimension of $x$ is large. Scenario-based approach, CvaR approximation, DC approximation, and and SS approximation all have the same drawback if nonconvex functions are involved. Actually Even solving the original large-scale NLP problem without chance constraints to global optimality is computationally intractable. However, solving SigVaR to local optimality also provides promising performance for many problems as shown in Section \ref{sec:results}. Small and large case studies are used to illustrate the concepts and demonstrate performance. 

The paper is organized as follows. Section \ref{sec:othermethod}  introduces basic nomenclature and reviews CVaR, Bernstein, and DC approximations. Section \ref{sec:sigmoid} introduces the SigVaR approximation and establishes properties. Section \ref{sec:comp} outlines a numerical scheme to solve a sequence of SIgVaR approximations. Section \ref{sec:results} provides numerical studies. Final remarks are provided in Section \ref{sec:final}.

\section{Review on CC Approximations}\label{sec:othermethod}

We review approaches to deal with CC-P in order to introduce some necessary concepts and notation. Derivations follow the work of \cite{nemirovski2006convex,pinter1989deterministic}.  We make the following blanket assumptions throughout the paper. 
\begin{asm}
\label{asm:cont}
There exists $\kappa > 0$ such that $F_{Z(x)}(t)$ is Lipschitz continuous in $t \in [-\kappa, \kappa]$ for every $x \in  \mathcal{X}$.
\end{asm}
Assumption \ref{asm:cont} is slightly weaker than the Assumption 4 in
 \cite{hong2011sequential}. We will discuss special cases violating Assumption \ref{asm:cont}, that is When $Z(x)$ is a discrete random variable, with $\mathbb{P}(Z(x) = 0)  = 0$ for every $x \in  \mathcal{X}$, in Section \ref{subsec:cc-p} following Theorem \ref{th:converge}.

\begin{asm}
\label{asm:reg}
$\mathcal{X}(\alpha) =  cl \mathcal{X}^I(\alpha)$ with $\mathcal{X}^I(\alpha):=\{\,x \in \mathcal{X}\,|  \mathbb{P}\left(f(x,\Xi)> 0\right) < \alpha\}$.
\end{asm}
This regularity assumption is Assumption 5 in \cite{hong2011sequential}.

\subsection{CVaR Approximation}
Because $\mathbb{P}(Z> 0)  = \mathbb{E}[1_{(0, \infty)}(Z)]$, the CC can be expressed as $\mathbb{P}(f(x,\Xi)> 0) \leq \alpha$, and we can use the equivalent formulation:
\begin{align}
\mathbb{E}[1_{(0, \infty)}(f(x,\Xi))] \leq \alpha. 
\end{align}
A computationally practical approach to approximate the CC is to find a {\em conservative approximation}. This is done by finding an approximating function $\psi:\mathbb{R}\to \mathbb{R}$ satisfying  $\psi(z) \geq 1_{[0, \infty)}(z) \geq 1_{(0, \infty)}(z)$ for any $z\in\mathbb{R}$. For such a function we have that $\psi(t^{-1}z) \geq 1_{[0, \infty)}(t^{-1}z) \geq 1_{(0, \infty)}(z)$ for any parameter $t>0$.  Consequently,
\begin{align}
\mathbb{E} [\psi(t^{-1}Z)] \geq \mathbb{P}(Z > 0).
\end{align}
We can thus conclude that the satisfaction of the constraint:
\begin{align}
\mathbb{E} [\psi(t^{-1}Z)] \leq \alpha,\label{essential_eqns}
 \end{align}
implies that $\mathbb{P}(Z> 0) \leq \alpha$ is satisfied (and so does $\mathbb{P}\left(f(x,\Xi)\leq 0\right) \geq 1- \alpha$). Because \eqref{essential_eqns} is valid for all $t>0$ we also have, if $\psi(\cdot)$ is convex, that:
\begin{align} 
\inf_{t>0} \{ t\,\mathbb{E} [\psi(t^{-1}Z)] - t\,\alpha\} \leq 0  \label{essential_inf_eqns}
\end{align}
implies $\mathbb{P}(Z> 0) \leq \alpha$. The quality of the conservative approximation depends on the choice of the approximating function $\psi(\cdot)$.  The choice  $\psi(z) := [1+z]_{+}$ with $[z]_{+}:=\text{max}\{z,0\}$ leads to the approximation:
\begin{align} \label{eq:cvar1}
\inf_{t>0} \{ \mathbb{E} \left[[Z+t]_{+}\right] - t\alpha\} \leq 0. 
\end{align}
$\inf_{t>0}$ can be replaced with $\inf_{t}$ to obtain:
\begin{align} 
\inf_{t\in \mathbb{R}} \left\{ \alpha^{-1} \mathbb{E} \left[[Z+t]_{+}\right] -t\right\} \leq 0. \label{quasicvar_eqns}
\end{align}
By redefining $t\leftarrow -t$ and recalling that $\text{CVaR}_{1-\alpha}(Z) {:=}  \inf_{t\in \mathbb{R}} \left\{ t + \alpha^{-1} \mathbb{E} \left[[Z-t]_{+}\right] \right\}$, we see that \eqref{quasicvar_eqns} can be used to derive a conservative approximation of CC-P of the form:
\begin{subequations}
\label{eq:cvarprob}
\begin{align}
\min \limits_{x \in  \mathcal{X} } \;\;  &\varphi(x)  \label{cvar1} \\
{\rm s.t.}  \;\; & \text{CVaR}_{1-\alpha}(f(x,\Xi)) \leq 0. \label{cvar_formulation}
\end{align}
\end{subequations}
We denote an optimal objective value and solution of this problem (which we call CVaR-P) as $\varphi_c(\alpha)$ and $x_c (\alpha)$, respectively. We  define the feasible set of CVaR-P as $\mathcal{X}_c(\alpha)$ and notice, because CVaR provides a conservative approximation,  that $\mathcal{X}_{c}(\alpha)\subseteq \mathcal{X}(\alpha)$. This also implies that $\varphi_c(\alpha)\geq \varphi(\alpha)$ for all $\alpha\in (0,1]$.% We note, however, that it is not guaranteed that the CVaR-P problem has a feasible solution for all $\alpha\in (0,1]$ (even if CC-P does). Consequently, any feasible solution $x_c(\alpha)$ of CVaR-P is feasible for CC-P \eqref{ccprob}. 

%From \eqref{essential_inf_eqns} with $\psi(z) = [1+z]_{+}$, we have that \eqref{cvar_formulation} implies $\mathbb{E} [[\gamma_\alpha Z_c(\alpha)+1]_{+}]  \leq \alpha$. We now show that we can always find a $t_c(\alpha)<0$ (equivalently $\gamma_\alpha>0$) at any $x_c(\alpha)$. Since \cref{cvar_formulation} is satisfied at $x_c(\alpha)$, we have that either $\text{CVaR}_{1-\alpha}(Z_c(\alpha)) <0$, which implies $ \text{VaR}_{1-\alpha}(Z_c(\alpha)) <  0$ it follows that $\gamma_\alpha > 0$ with $t_c(\alpha)=\text{VaR}_{1-\alpha}(Z_c(\alpha))$.  If $\text{CVaR}_{1-\alpha}(Z_c(\alpha)) =0$  we can have $ \text{VaR}_{1-\alpha}(Z_c(\alpha)) <  0$ (for which we have already established that $\gamma_\alpha>0$ exists) or $ \text{VaR}_{1-\alpha}(Z_c(\alpha)) =0$. In the later case we have  {$\mathbb{E} \left[[Z_c(\alpha)]_{+} \right]  = 0$} and thus $\text{argmin}_t \{ t + \alpha^{-1} \text{E} [[Z_c(\alpha)-t]_{+}] \} = \mathbb{R}$ and thus one can pick any $t_c(\alpha)<0$ such that $\gamma_\alpha>0$.  %$\mathbb{E} \left[[Z_c(\alpha)-t]_{+} \right]  = 0$ 

A key advantage of the CVaR approximation is that its sample average approximation (SAA) can be cast as a standard NLP \cite{cao2017scalable, rockafellar2000optimization}. Moreover, if $f(x, \xi)$ is convex in $x$ for given $\xi$, CVaR is also convex in $x$. One can also prove that the function $\psi(z) = [1+z]_{+}$ is the {\em tightest convex approximation} of $1_{[0, \infty)}(z)$.  Despite these benefits, the CVaR approximation can be quite conservative. Moreover, the CVaR approximation does not offer a mechanism to enforce convergence to a solution of CC-P. 

\subsection{Bernstein Approximation}
If we use  the function $\psi(z) = e^{z}$, \eqref{essential_eqns} takes the form $\mathbb{E} [e^{t^{-1}Z}]  \leq \alpha$. For $t >  0$ this is equivalent to,
\begin{align}
t\, \text{log}(\mathbb{E} [e^{t^{-1}Z}]) \leq t \,\text{log} (\alpha ).
 \end{align}
Because this relationship is valid for all $t>0$, we can also conclude that:
\begin{align} 
\inf_{t>0} \{ t\, \text{log}(\mathbb{E} [e^{t^{-1}Z}]) - t\, \text{log} (\alpha )  \} \leq 0 \label{Bernstein_eqns}
 \end{align}
which is called Bernstein approximation. From the definition of entropic value-at-risk (EVaR) \cite{ahmadi2012entropic}:
\begin{align} 
 \text{EVaR}_{1-\alpha}(Z):= \inf_{ t>0} \left\{ t^{-1} \text{log}\left(\alpha^{-1}\mathbb{E} [e^{t Z}]\right)\right\},  \label{evar_definition}
\end{align}
it is thus easy to see that \eqref{Bernstein_eqns} is equivalent to:
\begin{align} 
 \text{EVaR}_{1-\alpha}(Z) \leq 0 \label{evar_formulation}.
\end{align}
This conservative approximation can be handled using standard NLP techniques. Moreover, if $f(x, \Xi)$ is convex in $x$ for given $\xi$, EVaR is also convex in $x$. Unfortunately, one can prove that  EVaR is even more conservative than CVaR. This follows from $\text{VaR}_{1-\alpha}(Z)\leq \text{CVaR}_{1-\alpha}(Z)\leq\text{EVaR}_{1-\alpha}(Z)$ \cite{nemirovski2006convex}.

\subsection{DC Approximation}

In \cite{hong2011sequential} it is shown that the indicator function can be approximated by using a difference of convex functions (the authors in  \cite{hong2011sequential} assume assume that $f$ is convex). The DC approximation of $\mathbb{P}(f(x,\Xi)>0)\leq \alpha$ has the form:
%\begin{align}
%\inf_{t>0} \; \frac{1}{t}\mathbb{E}\left[\psi(f(x,\Xi),t)-\psi(f(x,\Xi),0)\right]\leq \alpha
%\end{align}
\begin{align}
\epsilon^{-1}\mathbb{E}\left[\psi(f(x,\Xi),\epsilon)-\psi(f(x,\Xi),0)\right]\leq \alpha.\label{eq:eps}
\end{align} 
where $\psi(z,t):=[z+t]_+$, $\epsilon\in\mathbb{R}_+$ is an approximation parameter. By using approximation \eqref{eq:eps} instead of \eqref{ccprob2}, we obtain problem DC-P.  In \cite{hong2011sequential} it is shown that DC-P is equivalent to CC-P for $\epsilon \to 0$. A practical limitation of DC-P is that its SAA cannot be cast as a standard NLP, due to the difference of max functions. Consequently, tailored algorithms are needed \cite{hong2011sequential}.

%\subsection{Solution method}
%Computational we generate $S$ scenarios $\{\xi_1, \ldots, \xi\}$ from the distribution of $\Xi$. $\mS:=\{\xi_1, \ldots, \xi\}$ is the scenario set and $S$ is the number of scenarios.

\subsection{Smooth Sigmoidal Approximation}
The authors in \cite{geletu2015tractable} approximate the indicator function by using a smooth sigmoidal function. The approximation has the form: 
\begin{align}
\mathbb{E}\left[\psi_{sm}^{\rho,m_1,m_2}(f(x,\Xi))\right]\leq \alpha \label{smoothprob}
\end{align}
where $\psi_{sm}^{\rho,m_1,m_2}(z):=\frac{1+\rho m_1}{1+\rho m_2e^{-z/\rho }}$, $m_1,m_2,\rho\in\mathbb{R}_+$ are approximation parameters satisfying $0 <m_2 \leq m_1$ and $\rho>0$. The framework proposed in \cite{geletu2015tractable} also consider the possibility of using functions $m_1(x)$ and $m_2(x)$. The SS approximation is exact in the limit $\rho \to 0$. We denote an optimal objective value and the feasible set of the  approximation with \eqref{smoothprob} (which we call SS-P) as $\varphi^{\rho,m_1, m_2}_{sm}(\alpha)$ and $\mathcal{X}^{\rho,m_1, m_2}_{sm}(\alpha)$. An important practical limitation of this approximation is that no guidelines exist to choose $\rho, m1, m2$.

\section{SigVaR Approximation} \label{sec:sigmoid}

As noticed in \cite{geletu2015tractable}, our work is motivated by the observation that the indicator function can be outer-approximated by using a {\em standard sigmoid function} of the form:
\begin{equation}\label{eq:sig}
\psi_s^{\mu,\tau}(z) := \frac{1+\mu}{ \mu + e^{-\tau z}},
\end{equation}
where $\mu,\tau\in\mathbb{R}_+$ are the approximation parameters. The associated CC approximation takes the form:
\begin{align} 
 \mathbb{E} \left[\psi_s^{\mu,\tau}(f(x,\Xi))\right]  \leq \alpha. \label{sigmoid_eqns}
\end{align}
The sigmoid function \eqref{eq:sig} is equivalent to $\psi_{sm}^{\rho,m_1,m_2}(z)$ when $\tau = \frac{1}{\rho}$, $\mu =   \frac{1}{\rho m_2}$, and $m_1=m_2$. The sigmoid function is also a special case of the generalized logistic function, which is a standard approximation function for the indicator function \cite{chen1995smoothing}. 

In this work, we consider a {\em variant of the sigmoid function} \eqref{eq:sig} of the form:
\begin{equation}
\psi_{ss}^{\mu,\tau}(z) := \left[2\frac{1+\mu}{ \mu + e^{-\tau z}}-1\right]_{+}. \label{eq:sig_va}
\end{equation}
This gives the  CC approximation:
\begin{align} 
 \mathbb{E}\left[ \psi_{ss}^{\mu,\tau}(f(x,\Xi))\right] \leq \alpha. \label{sigmoid_variant}
\end{align}
Although $\psi_{ss}^{\mu,\tau}$ is non-smooth, in Section \ref{sec:comp} we show that the sample average approximation with constraint \eqref{sigmoid_variant} can be cast as a standard NLP.

The motivation behind the tailored variant is illustrated in Figure \ref{fig.svar}, where we can see that the variant is more accurate than the standard counterpart because the max function sets $\psi_{ss}^{\mu,\tau}(z)=0$ for all $z \leq - \delta$ {where $\delta := \frac{1}{\tau} \log(2+\mu)$}. Although \eqref{eq:sig_va} is not smooth, we show in Section \ref{sec:comp} that it can still be cast as a standard NLP. In the following sections we prove that the structure of the proposed variant allows us to establish connections with CVaR and DC approximations. 

\begin{figure}[tbhp]
\begin{center}
\includegraphics[width=0.6\textwidth]{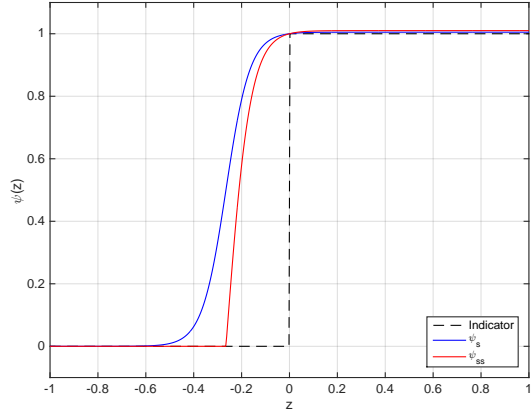}\vspace{-0.1in}
\caption{Comparison of standard and tailored sigmoid functions.}
\label{fig.svar}
\end{center}
\end{figure}

We being by showing that sigmoid functions provide natural conservative approximations for CCs. 
\begin{lemma}
\label{thm:conservative}
The constraints \eqref{sigmoid_eqns} and \eqref{sigmoid_variant} are conservative approximations of the CC \eqref{ccprob2} for any $\mu,\tau\in \mathbb{R}_+$, $\alpha \in (0,1]$, and $x\in \mathcal{X}$.
\end{lemma}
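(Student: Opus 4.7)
The plan is to reduce the claim to the template already established earlier in the paper (the derivation of CVaR): if an approximating function $\psi$ satisfies the pointwise bound $\psi(z)\ge 1_{(0,\infty)}(z)$ for every $z\in\mathbb{R}$, then $\mathbb{E}[\psi(Z)]\ge\mathbb{E}[1_{(0,\infty)}(Z)]=\mathbb{P}(Z>0)$ by monotonicity of the expectation, so the constraint $\mathbb{E}[\psi(Z)]\le\alpha$ forces $\mathbb{P}(f(x,\Xi)>0)\le\alpha$, which is exactly $\mathbb{P}(f(x,\Xi)\le 0)\ge 1-\alpha$. Thus the whole lemma boils down to verifying this pointwise domination for each of the two sigmoid variants.

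For the standard sigmoid $\psi_s^{\mu,\tau}(z)=(1+\mu)/(\mu+e^{-\tau z})$, I would split on the sign of $z$. When $z\le 0$ the indicator is zero and the sigmoid is manifestly nonnegative (numerator and denominator are both positive for $\mu,\tau\in\mathbb{R}_+$). When $z>0$, one has $e^{-\tau z}<1$, hence the denominator $\mu+e^{-\tau z}<\mu+1$, which gives $\psi_s^{\mu,\tau}(z)>1=1_{(0,\infty)}(z)$. Evaluation at $z=0$ yields $\psi_s^{\mu,\tau}(0)=1$, which is consistent with (and in fact tight against) the upper semicontinuous hull $1_{[0,\infty)}$, so the inequality $\psi_s^{\mu,\tau}(z)\ge 1_{(0,\infty)}(z)$ holds on all of $\mathbb{R}$.

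For the tailored variant $\psi_{ss}^{\mu,\tau}(z)=[2\psi_s^{\mu,\tau}(z)-1]_+$, I would use the bound just obtained. For $z\le 0$ the indicator is zero and the $[\cdot]_+$ operation guarantees $\psi_{ss}^{\mu,\tau}(z)\ge 0$. For $z>0$ the previous step gives $\psi_s^{\mu,\tau}(z)>1$, so $2\psi_s^{\mu,\tau}(z)-1>1$ and the positive part is redundant, yielding $\psi_{ss}^{\mu,\tau}(z)>1=1_{(0,\infty)}(z)$. At $z=0$ one obtains $\psi_{ss}^{\mu,\tau}(0)=1$. Together with monotonicity of expectation, this delivers $\mathbb{E}[\psi_{ss}^{\mu,\tau}(f(x,\Xi))]\ge\mathbb{P}(f(x,\Xi)>0)$ and hence conservatism of \eqref{sigmoid_variant}.

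There is no real obstacle here; the argument is a direct application of the conservative-approximation paradigm already laid out in Section \ref{sec:othermethod}. The only subtle point to flag is that the inequality is verified against $1_{(0,\infty)}$ rather than $1_{[0,\infty)}$, so attaining equality exactly at $z=0$ is allowed and does not weaken the conclusion; one could equivalently note that $\psi_s^{\mu,\tau}(z)\ge 1_{[0,\infty)}(z)\ge 1_{(0,\infty)}(z)$ so that the same bound survives taking expectations. This makes the statement hold uniformly for all admissible parameter values $\mu,\tau\in\mathbb{R}_+$ and all $\alpha\in(0,1]$, $x\in\mathcal{X}$.
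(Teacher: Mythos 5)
Your proposal is correct and follows essentially the same route as the paper: establish the pointwise domination $\psi_s^{\mu,\tau}(z)\ge 1_{[0,\infty)}(z)\ge 1_{(0,\infty)}(z)$ (and likewise for $\psi_{ss}^{\mu,\tau}$) by splitting on the sign of $z$ and using $0\le e^{-\tau z}\le 1$ for $z\ge 0$, then take expectations. The only cosmetic difference is that you verify strict inequalities for $z>0$ where the paper is content with non-strict ones; nothing of substance changes.
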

\begin{proof}
Consider the random variable $Z=f(x,\Xi)$ with realizations $z\in \mathbb{R}$. Since $e^{-\tau z} \geq 0$ holds for $z\in\mathbb{R}$ and $e^{-\tau z} \leq 1$ holds for $z\in\mathbb{R}_+$ we have that $ \frac{1+\mu}{ \mu + e^{-\tau z}}\geq0$ for any $z\in\mathbb{R}$ and $ \frac{1+\mu}{ \mu + e^{-\tau z}}\geq1$ holds for $z\in\mathbb{R}_+$. We thus have that $ 2\frac{1+\mu}{ \mu + e^{-\tau z}} - 1 \geq1$ holds for $z\in\mathbb{R}_+$. Therefore, $ \frac{1+\mu}{ \mu + e^{-\tau z}} \geq 1_{[0, \infty)}(z)$ and $\psi_{ss}^{\mu,\tau}(z)  \geq 1_{[0, \infty)}(z)$  for any $z\in\mathbb{R}_+$. Consequently,  $\mathbb{E} [\frac{1+\mu}{ \mu + e^{-\tau Z}}]\geq \mathbb{E} [1_{[0, \infty)}(Z)] \geq \mathbb{P}(Z> 0)$ and  $\mathbb{E}\left[ \psi_{ss}^{\mu,\tau}(Z) \right]\geq \mathbb{E} [1_{[0, \infty)}(Z)] \geq \mathbb{P}(Z> 0)$. The result follows.  $\Box$
\end{proof}

%\min\{t\in \mathbb{R} : 

We use the proposed function to define the Sigmoidal Value-at-Risk (SigVaR): 
\begin{align}
\text{SigVaR}_{1-\alpha}^{\mu,\tau}(Z)&:=\inf \left\{ t\in\mathbb{R}: \mathbb{E} \left[ \psi_{ss}^{\mu,\tau}(Z-t)\right]  \leq \alpha \right\}\nonumber\\
&\,=\inf \left\{  t\in\mathbb{R} : \mathbb{E} \left[ \left[2\frac{1+\mu}{ \mu + e^{-\tau(Z-t)}}-1\right]_{+}\right]  \leq \alpha \right\}.
\end{align}
and we use this to formulate the problem:
\begin{subequations}
\label{eq:svarprob}
\begin{align}
\min \limits_{x \in  \mathcal{X} } \;\;  &\varphi(x)  \\
{\rm s.t.}  \;\; & \text{SigVaR}_{1-\alpha}^{\mu,\tau}(f(x,\Xi)) \leq 0. \label{svar_formulation}
\end{align}
\end{subequations}
We define an optimal objective and solution of \eqref{eq:svarprob} as $\varphi_{ss}^{\mu,\tau}(\alpha)$ and $x_{ss}^{\mu,\tau}(\alpha)$. We also denote the set of optimal solutions as $S_{ss}^{\mu,\tau}(\alpha)$ and define the feasible set of \eqref{eq:svarprob} as $\mathcal{X}_{ss}^{\mu,\tau}(\alpha)$. From Lemma \ref{thm:conservative}, it is clear that $\mathcal{X}_{ss}^{\mu,\tau}(\alpha)\subseteq\mathcal{X}(\alpha)$ for all $\mu,\tau\in \mathbb{R}_+$.  This implies that $\varphi_{ss}^{\mu,\tau}(\alpha)\geq \varphi(\alpha)$ for all $\alpha\in (0,1]$ and $\mu,\tau\in \mathbb{R}_+$. 

The definition of SigVaR is motivated by the observation that $\text{VaR}_{1-\alpha}(Z)=\inf\{t: \mathbb{P}(Z-t > 0)\leq \alpha\}$ can also be expressed in terms of the indicator function:
\begin{align}
\text{VaR}_{1-\alpha}(Z)=\inf\{t\in \mathbb{R} : \mathbb{E}\left[1_{(0, \infty)}(Z-t)\right] \leq \alpha\}.
\end{align}
Because we have established that the sigmoid function $\psi_{ss}^{\mu,\tau}(\cdot)$ is a conservative approximation of $1_{(0, \infty)}(\cdot)$, we have that $\text{SigVaR}_{1-\alpha}^{\mu,\tau}(Z)\geq \text{VaR}_{1-\alpha}(Z)$. Consequently, SigVaR can be interpreted as an approximate quantile and \eqref{eq:svarprob} is a conservative representation of CC-P.  As in the case of the VaR representation of CC-P, problem \eqref{eq:svarprob} is not particularly attractive for computation. However, this problem also has the following equivalent representation (that we call SigVaR-P):
\begin{subequations}
\label{eq:svarprob2}
\begin{align}
\min \limits_{x \in  \mathcal{X} } \;\;  &\varphi(x)  \\
{\rm s.t.}  \;\; &  \mathbb{E}\left[ \psi_{ss}^{\mu,\tau}(f(x,\Xi))\right] \leq \alpha. \label{eq:approxcc}
\end{align}
\end{subequations} 
In Section \ref{sec:comp} we will show that the SAA approximation of SigVaR-P can be cast as a standard NLP. 

To show that \eqref{eq:svarprob2} and \eqref{eq:svarprob} are equivalent, we make the following observations. If $\mathbb{E} \left[ \psi_{ss}^{\mu,\tau}(Z)\right] \leq \alpha$ is satisfied then it implies that $t=0$ satisfies $\mathbb{E} \left[ \psi_{ss}^{\mu,\tau}(Z-t)\right] \leq \alpha$,  and since $\text{SigVaR}_{1-\alpha}^{\mu,\tau}(Z)$ is the smallest $t$ satisfying $\mathbb{E} \left[ \psi_{ss}^{\mu,\tau}(Z-t)\right] \leq \alpha$, then $\text{SigVaR}_{1-\alpha}^{\mu,\tau}(Z) \leq 0$. On the other hand, if $\text{SigVaR}_{1-\alpha}^{\mu,\tau}(Z) \leq 0$ is satisfied, according to the definition, $t=\text{SigVaR}_{1-\alpha}^{\mu,\tau}(Z)$ satisfies $\mathbb{E} \left[ \psi_{ss}^{\mu,\tau}(Z-t)\right]  \leq \alpha$. Since  $\mathbb{E} \left[ \psi_{ss}^{\mu,\tau}(Z-t)\right]$ is a decreasing function of $t$, then $t=0$ also satisfies $\mathbb{E} \left[ \psi_{ss}^{\mu,\tau}(Z-t)\right]  \leq \alpha$ and thus $\mathbb{E} \left[ \psi_{ss}^{\mu,\tau}(Z)\right] \leq \alpha$.

\subsection{Relationship with CC-P}\label{subsec:cc-p}

We now show that SigVaR-P becomes an exact approximation of CC-P in the limit of its parameter values. For the random variable $Z(x)=f(x,\Xi)$ with $x\in \mathcal{X}$, we define the SigVaR-CC approximation error:
\begin{align} 
\epsilon_{\mu,\tau}(x) &:=  \mathbb{E}\left[ \psi_{ss}^{\mu,\tau}(Z(x))\right] - \mathbb{P}(Z(x)>0). 
\end{align}
From Lemma \ref{thm:conservative} we have that $\epsilon_{\mu,\tau}(x)\geq 0$ for all $\mu,\tau\in \mathbb{R}_+$.

We proceed to establish a bound for the SigVaR-CC approximation error. Under Assumption \ref{asm:cont} we can establish that there exists a positive constant $L(x)$ satisfying $\mathbb{P}(-\nu\leq Z(x) \leq 0)  \leq L(x)\nu$ for all $x\in\mathcal{X}$, and any $\nu \in \mathbb{R}_+$. The reasoning is the following: when $\nu \leq \kappa$, from Lipschitz continuity of $F_{Z(x)}(t)$, we get $\mathbb{P}(-\nu\leq Z(x) \leq 0) = F_{Z(x)}(0) - F_{Z(x)}(-\nu) \leq L(x)\nu$, where $L(x)$ is set to be the Lipschitz constant.  When $\nu > \kappa$, we have $\mathbb{P}(-\nu\leq Z(x) \leq 0) \leq 1 \leq \frac{\nu}{\kappa}$ and we have $L(x)= \frac{1}{\kappa}$. A special case satisfying Assumption \ref{asm:cont} is when $Z(x)$ is a continuous random variable with bounded probability density $p_{Z(x)} < \infty$. In this case, we have that constant $L(x)=  \sup_{z\in \mathbb{R}}\{p_{Z(x)}(z)\}\in (0,\infty)$ exists and satisfies $\mathbb{P}(-\nu\leq Z(x) \leq 0) =  \int_{-\nu}^{0} p_{Z(x)}(z) dz \leq \int_{-\nu}^{0} L(x) dz=L(x)\nu$ for all $x\in\mathcal{X}$.

\begin{lemma}
\label{bounderror}
The SigVaR-CC error is bounded as $\epsilon_{\mu,\tau}(x) \leq\frac{\log(2+\mu)L(x)}{\tau} +  \frac{2}{\mu} $ for all $x\in \mathcal{X}$. 
\end{lemma}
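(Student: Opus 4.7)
The plan is to decompose the expectation defining $\epsilon_{\mu,\tau}(x)$ according to whether the realization $z$ of $Z(x)$ lies in one of three natural regimes determined by the shape of $\psi_{ss}^{\mu,\tau}$: the tail where $\psi_{ss}^{\mu,\tau}(z)=0$, the small transition window on the negative side, and the positive side where $\psi_{ss}^{\mu,\tau}(z)$ overshoots $1$.

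First I would verify the algebraic fact that $\psi_{ss}^{\mu,\tau}(z)=0$ exactly when $z\leq -\delta$, where $\delta = \tau^{-1}\log(2+\mu)$. This is immediate: the clamp is active when $2(1+\mu) \leq \mu + e^{-\tau z}$, which rearranges to $z \leq -\delta$. Next I would derive two elementary pointwise bounds on $\psi_{ss}^{\mu,\tau}$. For $z\in(-\delta,0]$ one has $0\leq \psi_{ss}^{\mu,\tau}(z) \leq \psi_{ss}^{\mu,\tau}(0) = 2\frac{1+\mu}{\mu+1}-1 = 1$. For $z>0$ one has $\psi_{ss}^{\mu,\tau}(z) \leq 2\frac{1+\mu}{\mu+0}-1 = 1 + \frac{2}{\mu}$, using $e^{-\tau z}\geq 0$.

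Writing $\mathbb{P}(Z(x)>0) = \mathbb{E}[1_{(0,\infty)}(Z(x))]$ and splitting on the three regions $\{Z\leq -\delta\}$, $\{-\delta<Z\leq 0\}$, $\{Z>0\}$, the contribution on the tail region vanishes. On the transition region the integrand is at most $1$, so that piece is bounded by $\mathbb{P}(-\delta<Z(x)\leq 0)$, which by the discussion preceding the lemma is at most $L(x)\delta = L(x)\log(2+\mu)/\tau$. On the positive region the integrand $\psi_{ss}^{\mu,\tau}(Z)-1$ is at most $2/\mu$, so the contribution is bounded by $\frac{2}{\mu}\mathbb{P}(Z(x)>0) \leq \frac{2}{\mu}$. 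Summing the three bounds gives the claim.

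There is no real obstacle: the only part that requires anything beyond routine manipulation is invoking the constant $L(x)$, but the authors have already produced this constant in the paragraph preceding the lemma via Assumption \ref{asm:cont}. I would just make sure to apply it with $\nu=\delta$ (valid for any $\nu\in\mathbb{R}_+$) rather than worrying about whether $\delta\leq \kappa$, since the construction of $L(x)$ handles both regimes uniformly. Everything else reduces to one-line monotonicity observations about the sigmoid $\frac{1+\mu}{\mu+e^{-\tau z}}$ at the three distinguished values $z=-\delta$, $z=0$, and $z\to\infty$.
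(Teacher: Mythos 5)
Your proposal is correct and follows essentially the same route as the paper's proof: the same three-way split at $-\delta$ and $0$, the same pointwise bounds $\psi_{ss}^{\mu,\tau}\leq 1$ on the transition window and $\psi_{ss}^{\mu,\tau}\leq 1+2/\mu$ on the positive side, and the same invocation of the constant $L(x)$ with $\nu=\delta$. Nothing is missing.
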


\begin{proof}  
We establish the result by following sequence of implications:
\begin{align*} 
\epsilon_{\mu,\tau}(x) &=   \mathbb{E}\left[ \psi_{ss}^{\mu,\tau}(Z(x))\right]  -  \mathbb{E}\left[1_{(0, \infty)}(Z(x))\right]  \\
&=   \mathbb{E}\left[ \psi_{ss}^{\mu,\tau}(Z(x)) - 1_{(0, \infty)}(Z(x))\right]  \\
&=   \mathbb{E}\left[ \psi_{ss}^{\mu,\tau}(Z(x)) | Z(x) < - \frac{1}{\tau} \log(2+\mu)\right]  \mathbb{P}\left(Z(x) < - \frac{1}{\tau} \log(2+\mu)\right)  \\
&+ \mathbb{E}\left[ \psi_{ss}^{\mu,\tau}(Z(x)) | - \frac{1}{\tau} \log(2+\mu) \leq Z(x) \leq 0\right]  \mathbb{P}\left(- \frac{1}{\tau} \log(2+\mu)  \leq Z(x) \leq 0\right)  \\
& +   \mathbb{E}\left[\psi_{ss}^{\mu,\tau}(Z(x)) - 1 | Z(x) > 0 \right]  \mathbb{P}(Z(x) > 0)   \\
& \leq \mathbb{E}\left[ 1 | - \frac{1}{\tau} \log(2+\mu) \leq Z(x) \leq 0\right]  \mathbb{P}\left(- \frac{1}{\tau} \log(2+\mu)  \leq Z(x) \leq 0\right)  \\
& +   \mathbb{E}\left[ \frac{2}{\mu} | Z(x) > 0 \right]  \mathbb{P}(Z(x) > 0)   \\
& =   \mathbb{P}\left(- \frac{1}{\tau} \log(2+\mu)  \leq Z(x) \leq 0\right)  +   \frac{2}{\mu}   \mathbb{P}(Z(x) > 0)   \\
&\leq \frac{1}{\tau}{\log(2+\mu)L(x)}+  \frac{2}{\mu}. 
\end{align*}
Here, the first inequality follows since $\psi_{ss}^{\mu,\tau}(Z(x))=0$ for $Z(x) < - \frac{1}{\tau} \log(2+\mu)$, $\psi_{ss}^{\mu,\tau}(Z(x)) \leq 1$ for $ - \frac{1}{\tau} \log(2+\mu)  \leq Z(x) \leq 0$ and $\psi_{ss}^{\mu,\tau}(Z(x)) \leq  \frac{2}{\mu} + 1$ for $  Z(x) >  0$. The last inequality follows from $ \mathbb{P}( Z > 0)\leq 1$. $\Box$
\end{proof}

%\begin{lemma}
%The SigVaR-CC error is bounded as $\epsilon_{\mu,\tau}(x) \leq\frac{\log(2+\mu)L(x)}{\tau} +  \frac{2}{\mu} $ for all $x\in \mathcal{X}$. 
%\end{lemma}
%\begin{proof}  We can establish the following sequence of implications:
%\begin{align*}
%\epsilon_{\mu,\tau}(x) &=  \int_{-\infty}^{\infty} \psi_{ss}^{\mu,\tau}(z) p_Z(z) dz -  \int_{0}^{\infty} p_Z(z) dz \\
%&=\int_{-\infty}^{0} \psi_{ss}^{\mu,\tau}(z) p_Z(z) dz +\int_{0}^{\infty} \psi_{ss}^{\mu,\tau}(z) p_Z(z) dz  -  \int_{0}^{\infty} p_Z(z) dz \\
%&=\int_{-\infty}^{0} \psi_{ss}^{\mu,\tau}(z) p_Z(z) dz +\int_{0}^{\infty} \left(2\frac{1+\mu}{ \mu + e^{-\tau Z}}-1\right) p_Z(z) dz  -  \int_{0}^{\infty} p_Z(z) dz \\
% &=  \int_{-\infty}^{0} \psi_{ss}^{\mu,\tau}(z) p_Z(z) dz +  \int_{0}^{\infty} \left(2\frac{1+\mu}{ \mu + e^{-\tau Z}}-2\right) p_Z(z) dz \\
%   &=  \int_{- \frac{1}{\tau} \log(2+\mu)}^{0} \psi_{ss}^{\mu,\tau}(z) p_Z(z) dz +  \int_{0}^{\infty} \left(\frac{2+2\mu}{ \mu + e^{-\tau Z}}-2\right) p_Z(z) dz \\
%   &\leq \int_{- \frac{1}{\tau} \log(2+\mu)}^{0}  p_Z(z) dz +  \int_{0}^{\infty} \frac{2}{\mu} p_Z(z) dz \\
% 	& = \mathbb{P}\left(- \frac{1}{\tau} \log(2+\mu) \leq Z < 0\right) + \frac{2}{\mu}   \mathbb{P}( Z > 0) \\  
%   &\leq \frac{1}{\tau}{\log(2+\mu)L(x)}+  \frac{2}{\mu}.
%\end{align*}
%Here, the last inequality follows from $ \mathbb{P}( Z > 0)\leq 1$.$\Box$
%\end{proof}

\begin{thm}
\label{th:converge}
Let $\tau(\mu):= (1+\mu) \theta$  with $\theta >0$. Then $ \lim\limits_{\mu\to \infty} \mathbb{E}\left[ \psi_{ss}^{\mu,\tau}(Z(x))\right] = \mathbb{P}(Z(x)>0)$.
\end{thm}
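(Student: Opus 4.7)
The plan is to invoke the error bound from Lemma \ref{bounderror} and show that the chosen parametrization $\tau(\mu)=(1+\mu)\theta$ forces both terms of the bound to vanish as $\mu \to \infty$. Since Lemma \ref{thm:conservative} already gives $\epsilon_{\mu,\tau}(x)\geq 0$, a squeeze argument then delivers the limit.

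More concretely, I would first recall that by the definition of the approximation error,
\begin{align*}
\mathbb{E}\left[\psi_{ss}^{\mu,\tau}(Z(x))\right] - \mathbb{P}(Z(x)>0) = \epsilon_{\mu,\tau}(x),
\end{align*}
so it suffices to show $\epsilon_{\mu,\tau}(x)\to 0$. By Lemma \ref{bounderror}, substituting $\tau=\tau(\mu)=(1+\mu)\theta$ gives
\begin{align*}
0 \leq \epsilon_{\mu,\tau(\mu)}(x) \leq \frac{\log(2+\mu)L(x)}{(1+\mu)\theta} + \frac{2}{\mu}.
\end{align*}

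Then I would take $\mu\to\infty$ on the right-hand side. The term $2/\mu$ trivially tends to $0$, and $\log(2+\mu)/(1+\mu)\to 0$ because the logarithm grows sublinearly (one can cite L'Hospital's rule or the standard fact $\log(1+s)/s \to 0$ as $s\to\infty$). Since $L(x)$ and $\theta$ are fixed positive constants, the upper bound tends to $0$, and the squeeze theorem yields $\lim_{\mu\to\infty}\epsilon_{\mu,\tau(\mu)}(x)=0$, which is exactly the desired limit.

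I do not expect any substantive obstacle here: all the nontrivial work (namely bounding $\epsilon_{\mu,\tau}(x)$ by splitting the expectation over the three regions $Z<-\tau^{-1}\log(2+\mu)$, $-\tau^{-1}\log(2+\mu)\leq Z\leq 0$, $Z>0$ and using Assumption \ref{asm:cont}) has been absorbed into Lemma \ref{bounderror}. The only point that merits a brief justification is why the coupling $\tau(\mu)=(1+\mu)\theta$ is the \emph{right} scaling: one needs $\tau$ to grow fast enough that $\log(2+\mu)/\tau(\mu)\to 0$, but any superlogarithmic scaling of $\tau$ with $\mu$ would also do. Linear growth in $1+\mu$ is the natural choice since it makes the bound particularly simple and will also be convenient in later sections that connect SigVaR parameters to CVaR.
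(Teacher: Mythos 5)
Your proposal is correct and follows essentially the same route as the paper: the paper's proof also consists of substituting $\tau(\mu)=(1+\mu)\theta$ into the bound of Lemma \ref{bounderror} (using $L:=\sup_{x\in\mathcal{X}}L(x)$) and letting $\mu\to\infty$. Your write-up merely spells out the elementary limits $\log(2+\mu)/(1+\mu)\to 0$ and $2/\mu\to 0$ and the squeeze with $\epsilon_{\mu,\tau}(x)\geq 0$, which the paper leaves implicit.
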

\begin{proof}
From Lemma \ref{bounderror} we can establish the bound {$\epsilon_{\mu,\tau} \leq \tau(\mu)^{-1}{\log(2+\mu)}L +  2\mu^{-1}$} with $L:=\sup_{x\in\mathcal{X}}L(x)$. The result follows. $\Box$
\end{proof}

{\bf Remark:} When $Z(x)$ is a discrete random variable, we can establish the error bound of Lemma \ref{bounderror} if $Z(x)$ has finite outcomes and we have that $\mathbb{P}(Z(x) = 0)  = 0$. Here, we assume that $Z(x)$ has finite $m$ possible outcomes $z_1(x)< z_2(x) <\dots < z_{m'}(x)<0<z_{m'+1}(x)<\dots<z_m(x)$ with corresponding probabilities as $p_{i}(x),\,i=1,...,m$. A bounding constant $L(x)$ can be found in this case by noticing that $\mathbb{P}(-\nu\leq Z(x) \leq 0) = \sum_{i=1}^{m'} p_i(x)$ if $-\nu \leq z_1(x)$, $\mathbb{P}(-\nu\leq Z(x) \leq 0) = \sum_{i=k}^{m'} p_i(x)$, if  $z_{k-1}(x)< -\nu \leq z_k(x)$,  and $\mathbb{P}(-\nu\leq Z(x) \leq 0) = 0$ if  $z_{m'}(x)< -\nu$. We thus have that $L(x):= \max_{k\in \{ 1, \dots, m' \}} \left\{ {\sum_{i=k}^{m' } p_i(x)}/{z_k(x)}   \right\}$ satisfies $\mathbb{P}(-\nu\leq Z(x) \leq 0) \leq L(x)\nu $. Consequently, the results of Theorem \ref{th:converge} hold. This is relevant because we are often interested in solving discrete approximations of SigVar-P (e.g., by using SAA). 

%By using this property we can establish that:
%\begin{align*} 
%\epsilon_{\mu,\tau}(x)  &=   \sum_{i=1}^{m} \psi_{ss}^{\mu,\tau}(z_i)p_i  -   \sum_{i=m'+1}^{m}  p_i \\
%    &=\sum_{i=1}^{m'-1} \psi_{ss}^{\mu,\tau}(z_i)p_i   + \sum_{i=m'}^{m} \psi_{ss}^{\mu,\tau}(z_i)p_i     -   \sum_{i=m'}^{m}  p_i \\
%     &=\sum_{i=1}^{m'-1} \psi_{ss}^{\mu,\tau}(z_i)p_i   + \sum_{i=m'}^{m} \left( 2\frac{1+\mu}{ \mu + e^{-\tau z_i}}-1\right) p_i     -   \sum_{i=m'}^{m}  p_i \\     
%          &=\sum_{i=1}^{m'-1} \psi_{ss}^{\mu,\tau}(z_i)p_i   + \sum_{i=m'}^{m} \left( 2\frac{1+\mu}{ \mu + e^{-\tau z_i}}-2\right) p_i    \\   
%	   &=\sum_{- \frac{1}{\tau} \log(2+\mu) \leq z_i < 0} \left(2\frac{1+\mu}{ \mu + e^{-\tau z_i}}-1\right) p_i   + \sum_{i=m'+1}^{m} \left( 2\frac{1+\mu}{ \mu + e^{-\tau z_i}}-2\right) p_i    \\                 
%	   &\leq\sum_{- \frac{1}{\tau} \log(2+\mu) \leq z_i < 0}  p_i   + \sum_{i=m'+1}^{m} \frac{2}{\mu} p_i    \\         	   
%	& = \mathbb{P}\left(- \frac{1}{\tau} \log(2+\mu) \leq Z < 0\right) + \frac{2}{\mu}   \mathbb{P}( Z > 0) \\
%   &\leq \frac{\log(2+\mu)L(x)}{\tau} +  \frac{2}{\mu}.  
%\end{align*}

The following result shows that we can construct a sequence of SigVaR approximations of increasing quality by progressively increasing $\mu$. 
{
\begin{lemma}
Let  $\tau(\mu):= (1+\mu) \theta$ with $\theta>0$. We have that $\mathcal{X}_{ss}^{\mu^+,\tau(\mu^+)}(\alpha)  \supseteq \mathcal{X}_{ss}^{\mu,\tau(\mu)}(\alpha)$ and $ \varphi_{ss}^{\mu^+,\tau(\mu^+)}(\alpha)\leq \varphi_{ss}^{\mu,\tau(\mu)}(\alpha)$  for $\mu^+>\mu>0$ and for all $\alpha \in (0,1]$.  
\label{thm:monotonical}
\end{lemma}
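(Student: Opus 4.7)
The plan is to reduce the set containment to a pointwise monotonicity statement about the approximating function $\psi_{ss}^{\mu,\tau(\mu)}(z)$ in the parameter $\mu$, and then to pass this monotonicity through the expectation. Concretely, if I can show that for every fixed $z\in \mathbb{R}$ the map $\mu\mapsto \psi_{ss}^{\mu,\tau(\mu)}(z)$ is nonincreasing when $\tau(\mu)=(1+\mu)\theta$, then for any $x\in \mathcal{X}_{ss}^{\mu,\tau(\mu)}(\alpha)$ I obtain
$\mathbb{E}[\psi_{ss}^{\mu^+,\tau(\mu^+)}(f(x,\Xi))]\le \mathbb{E}[\psi_{ss}^{\mu,\tau(\mu)}(f(x,\Xi))]\le \alpha$
by monotone integration, which is exactly the feasibility of $x$ for the parameters $(\mu^+,\tau(\mu^+))$. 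Containment of feasible sets with a common objective then immediately yields $\varphi_{ss}^{\mu^+,\tau(\mu^+)}(\alpha)\le \varphi_{ss}^{\mu,\tau(\mu)}(\alpha)$.

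The first step is to strip the outer positive part, since $[\cdot]_+$ is itself nondecreasing; it therefore suffices to prove that the untruncated quantity
$h(\mu,z):= 2\,\dfrac{1+\mu}{\mu+e^{-\tau(\mu)z}}-1$
is nonincreasing in $\mu$ for fixed $z$. I would differentiate $h$ in $\mu$, using $d\tau/d\mu=\theta$ and hence $\frac{d}{d\mu}e^{-\tau(\mu)z} = -\theta z\, e^{-\tau z}$. A straightforward (but bookkeeping-heavy) calculation gives
\[
\frac{\partial h}{\partial \mu}=\frac{2\bigl(e^{-\tau z}(1+\tau z)-1\bigr)}{(\mu+e^{-\tau z})^{2}}.
\]
So the sign reduces to a statement about the single-variable function $\phi(u):=e^{-u}(1+u)-1$ with $u=\tau z\in \mathbb{R}$.

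The second step is to show $\phi(u)\le 0$ for all $u$. Since $\phi(0)=0$ and $\phi'(u)=-u\,e^{-u}$, $\phi$ has a unique global maximum at $u=0$, proving $\phi\le 0$ everywhere. This yields $\partial h/\partial \mu \le 0$, and therefore $h(\mu^+,z)\le h(\mu,z)$ for $\mu^+>\mu$. Applying the monotone map $[\cdot]_+$ preserves the inequality, giving the desired pointwise bound $\psi_{ss}^{\mu^+,\tau(\mu^+)}(z)\le \psi_{ss}^{\mu,\tau(\mu)}(z)$.

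The third and final step is routine: take expectations over $\Xi$ at fixed $x$, invoke the pointwise bound together with monotonicity of the Lebesgue integral to transfer it to $\mathbb{E}[\psi_{ss}^{\mu^+,\tau(\mu^+)}(f(x,\Xi))]\le \mathbb{E}[\psi_{ss}^{\mu,\tau(\mu)}(f(x,\Xi))]$, and then conclude the containment and the inequality on the optimal values. The main obstacle is the second step: verifying the pointwise monotonicity is delicate because both the prefactor $1+\mu$ and the rate $\tau(\mu)=(1+\mu)\theta$ vary with $\mu$ simultaneously, so the two effects could in principle compete. The coupling $\tau=(1+\mu)\theta$ is precisely what makes the numerator collapse to $\phi(\tau z)$, whose maximum at $u=0$ guarantees the correct sign; without this scaling the monotonicity would fail, which is why the hypothesis $\tau(\mu)=(1+\mu)\theta$ is essential.
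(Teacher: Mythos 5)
Your proposal is correct and follows essentially the same route as the paper: both strip the outer $[\cdot]_+$ by monotonicity, differentiate the sigmoidal kernel in $\mu$ under the coupling $\tau(\mu)=(1+\mu)\theta$, and reduce the sign of the derivative to the elementary inequality $(1+u)e^{-u}\le 1$ (the paper cites convexity of the exponential where you verify it by examining $\phi'(u)=-u e^{-u}$). The concluding passage from pointwise monotonicity through the expectation to the feasible-set containment and the ordering of optimal values matches the paper's (largely implicit) final step.
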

\begin{proof}
We show that $\psi_{ss}^{\mu^+,\tau}(z) < \psi_{ss}^{\mu,\tau}(z)$ for any $z\in \mathbb{R} \setminus \{0\}$  (for $z=0$, we have $\psi_{ss}^{\mu,\tau}(z)=1$ for any $\mu$). To proceed, it suffices to show that the kernel function $\frac{1+\mu}{ \mu + e^{-\tau(\mu) z}}$ is a strictly decreasing function of $\mu$ for all $z\in \mathbb{R} \setminus \{0\}$. We establish this by showing that the derivative of of the kernel function is negative:
\begin{align*} 
\frac{d}{d\mu}\left({\frac{1+\mu}{ \mu + e^{-\tau(\mu) z}} }\right) &=  \frac{\mu + e^{-\tau(\mu) z}  - (1+\mu) (1  -  \theta z  e^{-\tau(\mu) z} )}{ (\mu + e^{-\tau(\mu) z})^2  } \\
		&= \frac{-1  +   (1+   (1+\mu)\theta z    )  e^{-\tau(\mu) z}         }{ (\mu + e^{-\tau(\mu) z})^2  } \\ 
		&= \frac{-1  +   (1+   \tau(\mu) z    )  e^{-\tau(\mu) z}         }{ (\mu + e^{-\tau(\mu) z})^2  } \\ 
       & < 0 .      
\end{align*}
The last step follows from {$1+\tau(\mu) z  < e^{\tau(\mu) z}$}, for any $z\in\mathbb{R} \setminus \{0\}$ (from Taylor's theorem and from the convexity of the exponential function).$\Box$
\end{proof}
}
The following result establishes convergence of the feasible set of SigVaR-P to that of CC-P. 

\begin{thm}
Let  $\tau(\mu) := (1+\mu) \theta$ with $\theta>0$. We have $\lim\limits_{\mu\to \infty}\mathcal{X}_{ss}^{\mu,\tau(\mu)}(\alpha){=}\mathcal{X}(\alpha)$.  
\label{thm:feasibleregionconverge}
\end{thm}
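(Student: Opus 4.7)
The plan is to prove the theorem as a two-sided inclusion, interpreting the limit of the monotonically increasing sequence $\{\mathcal{X}_{ss}^{\mu,\tau(\mu)}(\alpha)\}_{\mu>0}$ (monotonicity granted by Lemma \ref{thm:monotonical}) as $\mathrm{cl}\bigcup_{\mu>0}\mathcal{X}_{ss}^{\mu,\tau(\mu)}(\alpha)$. The three ingredients to be combined are Lemma \ref{thm:conservative} (conservatism), Lemma \ref{bounderror} together with the growth of $\tau(\mu)$ (pointwise error control), and Assumption \ref{asm:reg} (the feasible set of CC-P is the closure of its strict interior $\mathcal{X}^I(\alpha)$).

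For the easy inclusion, by Lemma \ref{thm:conservative} each $\mathcal{X}_{ss}^{\mu,\tau(\mu)}(\alpha)\subseteq\mathcal{X}(\alpha)$, so the nested union also sits inside $\mathcal{X}(\alpha)$. Since $\mathcal{X}(\alpha)$ is closed (the CC $\mathbb{P}(f(x,\Xi)\le 0)\ge 1-\alpha$ defines a closed set, and $\mathcal{X}$ is compact), its closure is contained in $\mathcal{X}(\alpha)$ as well.

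For the reverse inclusion, I would first show that every $x\in\mathcal{X}^I(\alpha)$ eventually lies in some $\mathcal{X}_{ss}^{\mu,\tau(\mu)}(\alpha)$. Fix $x\in\mathcal{X}^I(\alpha)$ and set $\gamma:=\alpha-\mathbb{P}(f(x,\Xi)>0)>0$. By Lemma \ref{bounderror},
\begin{equation*}
\mathbb{E}\!\left[\psi_{ss}^{\mu,\tau(\mu)}(Z(x))\right]\;\le\;\mathbb{P}(Z(x)>0)+\frac{\log(2+\mu)L(x)}{(1+\mu)\theta}+\frac{2}{\mu}.
\end{equation*}
The last two terms vanish as $\mu\to\infty$, so there exists $\bar\mu$ with $\mathbb{E}[\psi_{ss}^{\mu,\tau(\mu)}(Z(x))]\le\mathbb{P}(Z(x)>0)+\gamma=\alpha$ for all $\mu\ge\bar\mu$; hence $x\in\mathcal{X}_{ss}^{\mu,\tau(\mu)}(\alpha)$. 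This gives $\mathcal{X}^I(\alpha)\subseteq\bigcup_{\mu>0}\mathcal{X}_{ss}^{\mu,\tau(\mu)}(\alpha)$. Taking closures and invoking Assumption \ref{asm:reg},
\begin{equation*}
\mathcal{X}(\alpha)=\mathrm{cl}\,\mathcal{X}^I(\alpha)\;\subseteq\;\mathrm{cl}\!\bigcup_{\mu>0}\mathcal{X}_{ss}^{\mu,\tau(\mu)}(\alpha),
\end{equation*}
which together with the first inclusion yields the claimed set equality.

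The main obstacle is really the interpretation of the limit, since the nested union itself need not contain boundary points of $\mathcal{X}(\alpha)$ (a boundary point $x\in\mathcal{X}(\alpha)\setminus\mathcal{X}^I(\alpha)$ has $\mathbb{P}(Z(x)>0)=\alpha$, and then the strictly positive SigVaR error $\epsilon_{\mu,\tau(\mu)}(x)$ prevents $x$ from ever being feasible for the approximation). Assumption \ref{asm:reg} is precisely what rescues the statement by permitting any such boundary point to be approached by strictly feasible points $x_k\in\mathcal{X}^I(\alpha)$, each of which is eventually captured by the approximation. Beyond that subtlety, the argument is bookkeeping on the monotone convergence of the sets.
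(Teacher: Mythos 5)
Your proof is correct and follows essentially the same route as the paper's: both identify the limit of the nested sets with $\mathrm{cl}\bigcup_{\mu}\mathcal{X}_{ss}^{\mu,\tau(\mu)}(\alpha)$ via Lemma \ref{thm:monotonical}, obtain one inclusion from Lemma \ref{thm:conservative} plus closedness of $\mathcal{X}(\alpha)$, and obtain the other by showing each $x\in\mathcal{X}^I(\alpha)$ is eventually feasible (the paper cites Theorem \ref{th:converge}, you unfold it directly to Lemma \ref{bounderror}) before closing up with Assumption \ref{asm:reg}. The only cosmetic difference is that the paper additionally records that each $\mathcal{X}_{ss}^{\mu,\tau(\mu)}(\alpha)$ is closed via continuity of the expectation, which your argument does not need.
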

\begin{proof}
Take an arbitrary increasing sequence $\{\mu_k\}_{k \in \mathbb{N}}$ with $\mu_k \to \infty$. From Lemma \ref{thm:monotonical} and Exercise 4.3 (a) of \cite{rockafellar2009variational}, $\lim\limits_{k \to \infty}\mathcal{X}_{ss}^{\mu_k,\tau(\mu_k)}(\alpha)$ exist and $\lim\limits_{k \to \infty}\mathcal{X}_{ss}^{\mu_k,\tau(\mu_k)}(\alpha)$ $= cl \cup_{k \in \mathbb{N}}  \mathcal{X}_{ss}^{\mu_k,\tau(\mu_k)}$. Since $| \psi_{ss}^{\mu_k,\tau_k}(Z(x))| < 1+\frac{2}{\mu_k}$, from Theorem 7.43 of \cite{shapiro2009lectures}, $\mathbb{E}\left[ \psi_{ss}^{\mu_k,\tau(\mu_k)}(Z(x))\right]$ is a continuous function of $x$ and thus $\mathcal{X}_{ss}^{\mu_k,\tau(\mu_k)}$ is a closed set. Lemma \ref{thm:conservative} implies $\lim\limits_{k \to \infty}\mathcal{X}_{ss}^{\mu_k,\tau(\mu_k)}(\alpha)$ $\subset \mathcal{X}(\alpha)$. 

We then prove that $\lim\limits_{k \to \infty}\mathcal{X}_{ss}^{\mu_k,\tau(\mu_k)}(\alpha) \supset \mathcal{X}(\alpha)$. For any $x \in \mathcal{X}^I(\alpha)$, because of Theorem \ref{th:converge} and $\mathbb{P}\left(f(x,\Xi)> 0\right) < \alpha$, then there exist $\mu_0$ so that all $\mu > \mu_0$, $\mathbb{E}\left[ \psi_{ss}^{\mu,\tau(\mu)}(Z(x))\right] < \alpha$ and thus $x \in \mathcal{X}_{ss}^{\mu,\tau(\mu)}(\alpha)$. Combined with $\lim\limits_{k \to \infty}\mathcal{X}_{ss}^{\mu_k,\tau(\mu_k)}(\alpha)$ $= cl \cup_{k \in \mathbb{N}}  \mathcal{X}_{ss}^{\mu_k,\tau(\mu_k)}$, we have $\lim\limits_{k \to \infty}\mathcal{X}_{ss}^{\mu_k,\tau(\mu_k)}(\alpha) \supset \mathcal{X}^I(\alpha)$ . Because $\lim\limits_{k \to \infty}\mathcal{X}_{ss}^{\mu_k,\tau(\mu_k)}(\alpha)$ is a closed set then, by Assumption \ref{asm:reg}, we have $\lim\limits_{k \to \infty}\mathcal{X}_{ss}^{\mu_k,\tau(\mu_k)}(\alpha)$ $\supset \mathcal{X}(\alpha)$. $\Box$
\end{proof}

The following is our main result, which establishes convergence of the solution set and optimal objective value.

\begin{thm}
Let  $\tau(\mu) := (1+\mu) \theta$ with $\theta>0$. We have $\lim\limits_{\mu\to \infty} \varphi_{ss}^{\mu,\tau(\mu)}(\alpha) = \varphi^*(\alpha)$ and $\limsup\limits_{\mu\to \infty} S_{ss}^{\mu,\tau(\mu)}(\alpha) \subset S^*(\alpha)$. 
\label{thm:objconverge}
\end{thm}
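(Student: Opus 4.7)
The plan is a standard two-direction convergence argument built on the monotonicity in Lemma \ref{thm:monotonical} and the feasible-set convergence in Theorem \ref{thm:feasibleregionconverge}. Fix $\theta>0$ and $\tau(\mu):=(1+\mu)\theta$.

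First I would observe that by Lemma \ref{thm:monotonical} the map $\mu \mapsto \varphi_{ss}^{\mu,\tau(\mu)}(\alpha)$ is nonincreasing, and by Lemma \ref{thm:conservative} (i.e.\ $\mathcal{X}_{ss}^{\mu,\tau(\mu)}(\alpha)\subseteq \mathcal{X}(\alpha)$) it is bounded below by $\varphi^*(\alpha)$. Hence the limit $\bar\varphi := \lim_{\mu\to\infty}\varphi_{ss}^{\mu,\tau(\mu)}(\alpha)$ exists and satisfies $\bar\varphi\geq \varphi^*(\alpha)$.

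Next I would establish the reverse inequality. Pick any $x^*\in S^*(\alpha)\subseteq \mathcal{X}(\alpha)$. By Theorem \ref{thm:feasibleregionconverge} together with the nested, increasing structure of the sets $\{\mathcal{X}_{ss}^{\mu_k,\tau(\mu_k)}(\alpha)\}$ along any sequence $\mu_k\to\infty$, the Painlev\'e--Kuratowski convergence $\mathcal{X}_{ss}^{\mu_k,\tau(\mu_k)}(\alpha)\to\mathcal{X}(\alpha)$ gives a sequence $x_k\in \mathcal{X}_{ss}^{\mu_k,\tau(\mu_k)}(\alpha)$ with $x_k\to x^*$. Continuity of $\varphi$ then yields
\begin{equation*}
\bar\varphi \;=\; \limsup_{k\to\infty}\varphi_{ss}^{\mu_k,\tau(\mu_k)}(\alpha) \;\leq\; \limsup_{k\to\infty}\varphi(x_k) \;=\; \varphi(x^*) \;=\; \varphi^*(\alpha),
\end{equation*}
since each $x_k$ is feasible for the SigVaR problem at level $\mu_k$. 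Combining the two inequalities gives $\bar\varphi=\varphi^*(\alpha)$.

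For the solution-set inclusion, I would take any cluster point $\bar x$ of a sequence $x_k\in S_{ss}^{\mu_k,\tau(\mu_k)}(\alpha)$ with $\mu_k\to\infty$; passing to a subsequence, assume $x_k\to\bar x$. Each $x_k$ lies in $\mathcal{X}(\alpha)$, which is closed (and compact by hypothesis), so $\bar x\in\mathcal{X}(\alpha)$. By the objective-value convergence just proved and continuity of $\varphi$, $\varphi(\bar x) = \lim_k \varphi(x_k) = \lim_k \varphi_{ss}^{\mu_k,\tau(\mu_k)}(\alpha) = \varphi^*(\alpha)$, so $\bar x\in S^*(\alpha)$. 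This is exactly $\limsup_{\mu\to\infty} S_{ss}^{\mu,\tau(\mu)}(\alpha)\subset S^*(\alpha)$.

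The only delicate step is the construction of the recovery sequence $x_k\to x^*$ with $x_k\in \mathcal{X}_{ss}^{\mu_k,\tau(\mu_k)}(\alpha)$; the rest is bookkeeping. That step, however, follows directly from Theorem \ref{thm:feasibleregionconverge} and the nested-union characterization of the Painlev\'e--Kuratowski limit for monotone set sequences (Exercise 4.3(a) of \cite{rockafellar2009variational}) already invoked in its proof, so no genuinely new estimates are needed.
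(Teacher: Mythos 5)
Your proof is correct, but it takes a more elementary route than the paper. The paper's proof works with the extended-value functions $\bar{\varphi}_{\mu} = \varphi + I_{\mathcal{X}_{ss}^{\mu,\tau(\mu)}}$ and $\bar{\varphi} = \varphi + I_{\mathcal{X}(\alpha)}$ and invokes the epi-convergence toolbox of Rockafellar--Wets: Proposition 7.4(f) to convert the set convergence of Theorem \ref{thm:feasibleregionconverge} into epi-convergence of the indicator functions, Exercise 7.8(a) to add the continuous objective, Exercise 7.32(a) for eventual level-boundedness from compactness of $\mathcal{X}(\alpha)$, and Theorem 7.33 to conclude convergence of optimal values and $\limsup$-inclusion of the solution sets. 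You instead unpack exactly what that machinery does: the monotonicity of Lemma \ref{thm:monotonical} plus $\mathcal{X}_{ss}^{\mu,\tau(\mu)}(\alpha)\subseteq\mathcal{X}(\alpha)$ give existence of the limit and the lower bound $\bar\varphi\geq\varphi^*(\alpha)$; the recovery sequence extracted from the Painlev\'e--Kuratowski convergence (via the nested-union characterization already used in the proof of Theorem \ref{thm:feasibleregionconverge}) gives the upper bound; and a cluster-point argument handles the solution sets. Both arguments rest on the same two pillars (Lemma \ref{thm:monotonical} and Theorem \ref{thm:feasibleregionconverge}), so nothing genuinely new is needed either way; the paper's version is shorter given the citations and extends immediately to merely lower semicontinuous objectives, while yours is self-contained and makes transparent where each ingredient enters. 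One small point worth making explicit in your first step: $\varphi_{ss}^{\mu,\tau(\mu)}(\alpha)$ is finite only once $\mathcal{X}_{ss}^{\mu,\tau(\mu)}(\alpha)\neq\emptyset$, which holds for all sufficiently large $\mu$ because any $x\in\mathcal{X}^I(\alpha)$ (nonempty by Assumption \ref{asm:reg} when $\mathcal{X}(\alpha)\neq\emptyset$) is eventually feasible by Theorem \ref{th:converge}; the monotone limit is then taken over that tail. The paper's proof implicitly relies on the same fact (through properness of $\bar{\varphi}_\mu$), so this is a shared, easily repaired omission rather than a gap specific to your argument.
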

\begin{proof}
Let $\bar{\varphi}(x) = \varphi(x) + I_{\mathcal{X}(\alpha)}(x)$ and $\bar{\varphi}_{\mu}(x) = \varphi(x) + I_{\mathcal{X}_{ss}^{\mu,\tau(\mu)}}(x)$, where $I_A(x) = 0$ if $x \in A$ and $I_A(x) = + \infty$ if $x \notin A$. By Proposition 7.4(f) of \cite{rockafellar2009variational}, we have $I_{\mathcal{X}_{ss}^{\mu,\tau(\mu)}}(.)$ epi-converges to $I_{\mathcal{X}(\alpha)}(.)$ as  $\mu\to \infty$. Since $\varphi(x)$ is continuous, by Exercise 7.8(a) in \cite{rockafellar2009variational}, $\bar{\varphi}_{\mu}(.)$ epi-converges to $\bar{\varphi}(.)$ as  $\mu\to \infty$.  Because $\mathcal{X}(\alpha)$ is bounded, by Exercise 7.32 (a) of \cite{rockafellar2009variational}, $\bar{\varphi}_{\mu}(x)$ is eventually level bounded. Because $\bar{\varphi}_{\mu}(.)$ and $\bar{\varphi}(.)$ are lower semi-continuous and proper,  by Theorem 7.33 of \cite{rockafellar2009variational}, we have $\lim\limits_{\mu\to \infty} \varphi_{ss}^{\mu,\tau}(\alpha) = \varphi^*(\alpha)$ and $\limsup\limits_{\mu\to \infty} S_{ss}^{\mu,\tau}(\alpha) \subset S^*(\alpha)$.

$\Box$
\end{proof}

\subsection{Relationship with CVaR-P}

We define $Z_c(\alpha):=f(x_c(\alpha),\Xi)$ and recall that \cite{rockafellar2000optimization}:
\begin{align}
\text{VaR}_{1-\alpha}(Z_c(\alpha))= \argmin_{t} \left\{ t + \alpha^{-1} \mathbb{E} \left[[Z_c(t)-t]_{+}\right] \right\},
\end{align}
and thus $\text{VaR}_{1-\alpha}(Z_c(\alpha))\leq \text{CVaR}_{1-\alpha}(Z_c(\alpha))$.  This observation also highlights that CVaR provides a conservative approximation for the CC. 

Crucial to our results is the constant:
\begin{equation}\label{eq:afunc}
\gamma_\alpha:=-{t_c(\alpha)^{-1}}.
\end{equation}
with $t_c(\alpha)\in \argmin\limits_{t}\{ t + \alpha^{-1} \mathbb{E} [Z_c(t)-t]_{+}\}$.  

We now argue that we can always find a $t_c(\alpha)<0$ (equivalently $\gamma_\alpha>0$) at any $x_c(\alpha)$. 
Since \eqref{cvar_formulation} is satisfied at $x_c(\alpha)$ and $\text{VaR}_{1-\alpha}(Z_c(\alpha))\leq \text{CVaR}_{1-\alpha}(Z_c(\alpha))$, we have that either  $ \text{VaR}_{1-\alpha}(Z_c(\alpha)) <  0$ or $ \text{VaR}_{1-\alpha}(Z_c(\alpha)) =  \text{CVaR}_{1-\alpha}(Z_c(\alpha)) = 0$. In the first case, it follows that $\gamma_\alpha > 0$ with $t_c(\alpha)=\text{VaR}_{1-\alpha}(Z_c(\alpha))$. In the latter case, from $ \text{VaR}_{1-\alpha}(Z_c(\alpha)) =  \text{CVaR}_{1-\alpha}(Z_c(\alpha)) = 0$, we have $\mathbb{E} \left[[Z_c(\alpha)]_{+} \right]  = 0$, thus $\mathbb{P}(Z > 0) = 0$ and $F_{Z_c}(0) = 1$. We have both $F_{Z_c}(0) = 1$ and $F_{Z_c}(0) = 1-\alpha$ (from $ \text{VaR}_{1-\alpha}(Z_c(\alpha))=0$), which violates the Assumption \ref{asm:cont}. Thus the later case is not valid. Even if Assumption \ref{asm:cont} does not hold (e.g. $Z_c$ is a discrete random variable with finite outcomes, as long as $\mathbb{P}(Z = 0) = 0$, we cannot have that both $F_{Z_c}(0) = 1$ and $\text{VaR}_{1-\alpha}(Z_c(\alpha)) = 0$ hold.
 
  %we can always find $t<0$ satisfies $F_{Z_c}(t)=1-\alpha$ and thus $\text{VaR}_{1-\alpha}(Z_c(\alpha)) < 0$
We now show that the parameters of SigVaR-P can be selected in such a way that they provide an approximation of CC-P that is at least as good as that of CVaR-P. 

\begin{proposition}
Assume a fixed $\alpha\in (0,1]$ and that $\mu,\tau_{\alpha}\in\mathbb{R}_+$ satisfy $\mu\geq \bar{\mu}$ (where $\bar{\mu}\in\mathbb{R}_+$ is the positive solution of ${\bar{\mu}-\log(2+\bar{\mu})}=1$), $\tau_{\alpha}:=\frac{\mu+1}{2}\gamma_\alpha$, and $\gamma_\alpha$ defined in \eqref{eq:afunc}. We have that $\varphi_{ss}^{\mu,\tau_\alpha}(\alpha) \leq \varphi_c(\alpha)$. 
\label{thm:svarvscvar}
\end{proposition}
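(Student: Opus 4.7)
The plan is to show that the CVaR-optimal solution $x_c(\alpha)$ is itself feasible for the SigVaR problem \eqref{eq:svarprob2}. Since both problems share the same objective and the same outer feasible set $\mathcal{X}$, such feasibility would immediately yield $\varphi_{ss}^{\mu,\tau_\alpha}(\alpha)\leq \varphi(x_c(\alpha)) = \varphi_c(\alpha)$. The task therefore reduces to verifying the single inequality $\mathbb{E}[\psi_{ss}^{\mu,\tau_\alpha}(Z_c(\alpha))]\leq \alpha$, where $Z_c(\alpha):=f(x_c(\alpha),\Xi)$.

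I would obtain this from the CVaR feasibility of $x_c(\alpha)$, namely $t_c(\alpha)+\alpha^{-1}\mathbb{E}[[Z_c(\alpha)-t_c(\alpha)]_+]\leq 0$, equivalently $\mathbb{E}[[Z_c(\alpha)-t_c(\alpha)]_+]\leq -\alpha t_c(\alpha) = \alpha/\gamma_\alpha$. The cleanest route is to prove the pointwise inequality
$$\psi_{ss}^{\mu,\tau_\alpha}(z)\leq \gamma_\alpha\,[z-t_c(\alpha)]_+,\qquad z\in\mathbb{R},$$
since then taking expectations and multiplying by $\gamma_\alpha$ yields the required bound $\mathbb{E}[\psi_{ss}^{\mu,\tau_\alpha}(Z_c(\alpha))]\leq \alpha$.

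For the pointwise inequality, I would change variables to $u:=\gamma_\alpha z$ and use $t_c(\alpha)=-1/\gamma_\alpha$ together with $\tau_\alpha z=(\mu+1)u/2$ to recast the bound as $[\tfrac{2(1+\mu)}{\mu+e^{-(\mu+1)u/2}}-1]_+\leq [u+1]_+$, and then split into two regimes. For $u\leq -1$ the right-hand side vanishes, so the left-hand side must vanish too; the condition for this is $u\leq -2\log(\mu+2)/(\mu+1)$, which covers all $u\leq -1$ precisely when $2\log(\mu+2)/(\mu+1)\leq 1$. For $u\geq -1$, clearing the positive denominator rewrites the inequality as $F(u):=\mu u+(u+2)e^{-(\mu+1)u/2}-2\geq 0$; evaluating at $u=-1$ gives $F(-1)=e^{(\mu+1)/2}-\mu-2\geq 0$, which is again equivalent to $\mu+1\geq 2\log(\mu+2)$. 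Thus both cases boil down to the single numerical condition $\mu+1\geq 2\log(\mu+2)$, which I would deduce from $\mu\geq\bar\mu$ by noting that $\mu\mapsto \mu+1-2\log(\mu+2)$ is increasing on $[0,\infty)$ and strictly positive at $\bar\mu$ (using $\bar\mu-\log(\bar\mu+2)=1$, which forces $\bar\mu<3$).

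The main obstacle is promoting $F(-1)\geq 0$ into $F\geq 0$ throughout $[-1,\infty)$. The key calculation is $F''(u)=-\tfrac{\mu+1}{2}e^{-(\mu+1)u/2}\bigl[2-\tfrac{(\mu+1)(u+2)}{2}\bigr]$, whose unique zero is $u^*:=(2-2\mu)/(\mu+1)<0$; so $F$ is concave on $(-\infty,u^*)$ and convex on $(u^*,\infty)$. Combined with $F(0)=F'(0)=0$ and $0>u^*$, convexity on $(u^*,\infty)$ gives $F\geq 0$ there. On the concave remainder $[-1,u^*]$ (nonempty only when $\mu<3$) the minimum is attained at an endpoint: $F(-1)\geq 0$ by what was just shown, and a direct computation yields $F(u^*)=2[2e^{\mu-1}-\mu^2-1]/(\mu+1)$, which is nonnegative for $\mu\geq 1$ by an elementary Taylor comparison at $\mu=1$. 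Combining everything establishes the pointwise bound and the claimed inequality $\varphi_{ss}^{\mu,\tau_\alpha}(\alpha)\leq\varphi_c(\alpha)$.
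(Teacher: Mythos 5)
Your proof is correct, and its skeleton is the same as the paper's: reduce $\varphi_{ss}^{\mu,\tau_\alpha}(\alpha)\leq\varphi_c(\alpha)$ to SigVaR-feasibility of $x_c(\alpha)$, which in turn follows from the pointwise domination $\psi_{ss}^{\mu,\tau_\alpha}(z)\leq[\gamma_\alpha z+1]_+$ together with $\mathbb{E}[[\gamma_\alpha Z_c+1]_+]\leq\alpha$. Where you genuinely diverge is in how that pointwise inequality is verified. The paper splits at $z=-\delta$ (where the sigmoid bracket changes sign) and at $z=0$: on $[-\delta,0]$ it lower-bounds $e^{-\tau z}$ by its second-order Taylor polynomial, after which the choice $\tau=\frac{\mu+1}{2}\gamma$ collapses the quadratic to the scalar condition $\mu-1-\log(2+\mu)\geq 0$; on $[0,\infty)$ it shows $h'\geq 0$ with $h(0)=0$. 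You instead normalize $u=\gamma_\alpha z$, which eliminates $\gamma$ and $\tau$ entirely and leaves a one-parameter family of inequalities in $\mu$; you split at $u=-1$ where the hinge vanishes, and prove $F\geq 0$ on $[-1,\infty)$ by locating the unique inflection point $u^*$, using convexity plus the double tangency $F(0)=F'(0)=0$ to the right of $u^*$, and concavity plus the endpoint values $F(-1),F(u^*)\geq 0$ on $[-1,u^*]$ (needed only for $\mu<3$). Both routes funnel into essentially the same elementary condition; yours is the slightly weaker $\mu+1\geq 2\log(\mu+2)$, which you dispatch neatly via $\bar\mu+1-2\log(\bar\mu+2)=3-\bar\mu>0$. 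Your version is more systematic in that it identifies $z=0$ as the exact point of tangency and avoids the Taylor truncation, at the cost of the extra endpoint computation $F(u^*)=2(2e^{\mu-1}-\mu^2-1)/(\mu+1)\geq 0$; it also shows, incidentally, that the conclusion already holds for $\mu$ somewhat below $\bar\mu$ (roughly $\mu\gtrsim 1.52$), though the proposition does not ask for this.
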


\begin{proof}
For simplicity, we omit dependency on $\alpha$ for $x_c(\alpha)$, $\gamma_\alpha$, and $\tau_{\alpha}$ (we simply write $x_c,\gamma,\tau$). We proceed by proving that any solution $x_c$ of CVaR-P is a feasible point for SigVaR-P provided that $\mu,\tau$ satisfy the conditions of the proposition. This would imply that we can always find $\mu,\tau$ such that $\varphi_{ss}^{\mu,\tau}(\alpha) \leq \varphi_c(\alpha)$. We define the random variable $Z_c=f(x_c,\Xi)$ with realizations $z_c\in \mathbb{R}$; the constraint \eqref{cvar_formulation} evaluated at $x_c,\gamma$ can be written as $ \mathbb{E} [[\gamma Z_c+1]_{+}]  \leq \alpha$. It suffices to show that $[\gamma z_c+1]_{+} \geq [2\frac{1+\mu}{ \mu + e^{-\tau z_c}} - 1]_{+}$ holds for any $z_c\in\mathbb{R}$. If $z_c < - \delta$, where $\delta := \frac{1}{\tau} \log(2+\mu)$, we have that $2\frac{1+\mu}{ \mu + e^{-\tau z_c}} - 1<0$  and, consequently,  $[\gamma z_c+1]_{+} \geq [2\frac{1+\mu}{ \mu + e^{-\tau z_c}} - 1]_{+}$.  For $z_c \geq - \delta$ we have that,
\begin{align} 
 \gamma z_c+1 \geq&  1 -  \frac{\gamma}{\tau} \log(2+\mu) \nonumber\\
\geq&  1 -  \frac{2\log(2+\mu)}{\mu+1}\nonumber\\ 
>&0. \label{eq::azpositive}
\end{align}
The last inequality follows because $\frac{2\log(2+\mu)}{\mu+1}$ is a monotonically decreasing function for $\mu\in\mathbb{R}_+$. We also observe that, for $2\frac{1+\mu}{ \mu + e^{-\tau z_c}} - 1\geq 0$,
\begin{align} 
 [\gamma z_c+1]_{+} - \left[2\frac{1+\mu}{ \mu + e^{-\tau z_c}} - 1\right]_{+} &=  [\gamma z_c+1] -  \left[2\frac{1+\mu}{ \mu + e^{-\tau z_c}} - 1\right] \nonumber \\
&=  \frac{(\gamma z_c+2)( \mu + e^{-\tau z_c})  - 2-2\mu}{ \mu + e^{-\tau z_c}} .
\end{align}
We now define $h(z_c):=(\gamma z_c+2)( \mu + e^{-\tau z_c})  - 2-2\mu$ and proceed to show that $h(z_c)\geq0$ holds for $0\geq z_c \geq - \delta$.  This is established from the following sequence of implications:
\begin{subequations}
\begin{align} 
h(z_c)=&(\gamma z_c+2)( \mu + e^{-\tau z_c})  - 2-2\mu \\    \label{eq::gzpositive1}
= & (\gamma z_c+2)\left(\mu + \sum _{n=0}^{\infty }{\frac{(-\tau z_c)^{n}} {n!}}  \right) - 2 - 2\mu \\\label{eq::gzpositive2}
\geq & (\gamma z_c+2)\left(\mu + 1- \tau z_c + \frac{(\tau z_c)^2}{2} \right) - 2 - 2\mu \\\label{eq::gzpositive3}
=& \gamma z_c \left( \mu+1  -\frac{2\tau}{\gamma} -\tau z_c + \frac{\tau^2 z_c}{\gamma} +\frac{\tau^2 z_c^2}{2}\right )  \\\label{eq::gzpositive4}
=& \gamma \tau z_c^2 \left( \frac{\mu+1}{2} -1 + \frac{\tau z_c}{2} \right ) \\\label{eq::gzpositive5}
\geq & \gamma \tau z_c^2 \left( \frac{\mu-1-\log(2+\mu)}{2} \right)\\ \label{eq::gzpositive6}
\geq & 0.
\end{align}
\end{subequations}
Here, \eqref{eq::gzpositive2} follows because $\gamma z_c+1 >0$ and $-\tau z_c \geq 0$. \eqref{eq::gzpositive4} follows since $\tau:=\frac{\mu+1}{2}\gamma$. 
\eqref{eq::gzpositive5} follows since $z_c \geq - \delta$. \eqref{eq::gzpositive6} follows because $\mu-1-\log(2+\mu)$ is a monotonically increasing function for $\mu\geq 0$ and $\mu\geq \bar{\mu}$. For  $z_c \geq 0 $ we have,
\begin{align} 
h'(z_c) &= \gamma \mu + (\gamma-\tau\gamma z_c-2\tau) e^{-\tau z_c}\nonumber \\
       &=  \gamma \mu + 2(\gamma-\tau)e^{-\tau z_c} - \gamma\left (\frac{\tau z_c+1}{e^{\tau z_c}}\right)\nonumber \\
       &\geq \gamma \mu + 2(\gamma-\tau) -\gamma\nonumber\\   \label{eq::g'zpositive3}
       &=0.
\end{align}
This follows because $\gamma-\tau < 0$ ($ \bar{\mu}>2$), $0<e^{-\tau z_c}\leq1$, and $\frac{\tau z+1}{e^{\tau z_c}}\leq1$. Since $h(0) =0$ we have that  $h(z_c)\geq0$ for $z_c \geq 0 $. We thus have that $\text{SigVaR}_{1-\alpha}^{\mu,\tau}(f(x_c,\Xi)) \leq 0$ holds for $\mu,\tau$ satisfying the conditions of the proposition.  $\Box$
\end{proof}

Proposition \ref{thm:svarvscvar} is of practical computational relevance because it indicates that we can use the solution of CVaR-P (which is a computationally attractive formulation) to find an initial guess for SigVar-P.  We also note that Proposition \ref{thm:svarvscvar} implies that SigVaR provides an approximation that is at least as good as that of EVaR. 

\subsection{Relationship with DC-P}
The following results compare the solutions of SigVaR-P and DC-P. To establish these results, we define the SigVaR-DC error:
\begin{align} 
d_{\mu,\tau} &:=     \mathbb{E}[\psi_{ss}^{\mu,\tau}(Z)]-\epsilon^{-1}\mathbb{E}\left[ \left[Z+\epsilon\right]_{+} - \left[Z\right]_{+} \right] . 
\end{align}
In addition, we define $d_{\mu,\tau}(z) :=     \psi_{ss}^{\mu,\tau}(z)- \epsilon^{-1}\left[ \left[z+\epsilon\right]_{+} - \left[z\right]_{+} \right]$ for all $z\in \mathbb{R}$. Consequently, $d_{\mu,\tau} =\mathbb{E}[d_{\mu,\tau}(Z)]$. \\

We now establish a lower bound for the SigVar-DC error. 
\begin{proposition}
Assume that $\tau\in\mathbb{R}_+$ satisfies $\tau \leq  \frac{1}{2}\epsilon^{-1}$. We have that $d_{\mu,\tau} \geq 0$ for any $\mu\in \mathbb{R}_+$. 
\label{thm:svarvsDC2}
\end{proposition}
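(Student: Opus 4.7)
My plan is to establish the stronger pointwise statement $\psi_{ss}^{\mu,\tau}(z) \geq \epsilon^{-1}([z+\epsilon]_+ - [z]_+) =: r(z)$ for every $z \in \mathbb{R}$; nonnegativity of $d_{\mu,\tau}(z)$ pointwise propagates through the expectation to yield $d_{\mu,\tau} = \mathbb{E}[d_{\mu,\tau}(Z)] \geq 0$. The kernel $r$ is the piecewise-linear ramp that is $0$ on $(-\infty,-\epsilon]$, increases linearly from $0$ to $1$ on $[-\epsilon,0]$, and equals $1$ on $[0,\infty)$, so I would partition $\mathbb{R}$ into these three pieces.

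The two outer pieces are immediate. On $(-\infty,-\epsilon]$ we have $r(z)=0$ while $\psi_{ss}^{\mu,\tau}(z) \geq 0$ from the truncation with $[\cdot]_+$. On $[0,\infty)$ we have $r(z) = 1$, and since $\psi_{ss}^{\mu,\tau}(0) = 2(1+\mu)/(\mu+1) - 1 = 1$ and the positive branch of $\psi_{ss}^{\mu,\tau}$ is strictly increasing (its derivative is proportional to $\tau e^{-\tau z}/(\mu+e^{-\tau z})^2>0$), we get $\psi_{ss}^{\mu,\tau}(z) \geq 1 = r(z)$. The real work is on $[-\epsilon, 0]$.

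On this interval, the hypothesis $\tau \leq \tfrac{1}{2}\epsilon^{-1}$ implies $\epsilon \leq \tfrac{1}{2\tau} < \tfrac{\log(2+\mu)}{\tau} = \delta$ (using $\log(2+\mu) \geq \log 2 > 1/2$), so the max in $\psi_{ss}^{\mu,\tau}$ is inactive and $\psi_{ss}^{\mu,\tau}(z) = 2(1+\mu)/(\mu+e^{-\tau z}) - 1$. Clearing the positive denominator $\epsilon(\mu+e^{-\tau z})$ and simplifying, the desired inequality $\psi_{ss}^{\mu,\tau}(z) \geq r(z)$ becomes
\[
2\epsilon(1-e^{-\tau z}) - z(\mu + e^{-\tau z}) \geq 0.
\]
Substituting $u := -\tau z \in [0,\tau\epsilon] \subseteq [0,1/2]$ turns this into $(u/\tau)(\mu + e^{u}) \geq 2\epsilon(e^{u}-1)$. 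Since $1/\tau \geq 2\epsilon$ and $\mu \geq 0$, it suffices to show $u e^{u} \geq e^{u} - 1$, equivalently $u - 1 + e^{-u} \geq 0$, and the latter vanishes at $u=0$ with nonnegative derivative $1-e^{-u}$ on $[0,\infty)$.

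The main obstacle is the middle interval $[-\epsilon,0]$, where both functions bend and coincide at the right endpoint, so the pointwise gap must be controlled along the entire profile. The hypothesis $\tau\epsilon \leq 1/2$ enters twice and is essential: once to keep the sigmoid in its smooth positive branch (so that the $[\cdot]_+$ is trivial on this interval), and once as the constant that makes the elementary inequality $e^{-u} \geq 1-u$ just strong enough to close the estimate.
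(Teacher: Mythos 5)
Your proof is correct and follows essentially the same route as the paper: the same three-interval decomposition at $-\epsilon$ and $0$, with the two outer pieces handled trivially and the middle piece reduced, after clearing the positive denominator, to exactly the quantity the paper calls $-\epsilon\,\hat{h}(z)$ with $\hat{h}(z)=(\epsilon^{-1}z+2)(\mu+e^{-\tau z})-2-2\mu$. The only (cosmetic) difference is how that middle inequality is verified --- you substitute $u=-\tau z$ and invoke $u e^{u}\geq e^{u}-1$, whereas the paper shows $\hat{h}'(z)\geq \epsilon^{-1}\mu\geq 0$ together with $\hat{h}(0)=0$; both hinge on the hypothesis $\tau\leq\tfrac{1}{2}\epsilon^{-1}$ in the same way.
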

\begin{proof}  
We proceed by proving that $d_{\mu,\tau}(z) \geq 0$ holds for any $z\in\mathbb{R}$. If $z < - \epsilon$ we have that $ \epsilon^{-1}\left[ \left[z+\epsilon\right]_{+} - \left[z\right]_{+} \right]   = 0$  and, consequently,  $d_{\mu,\tau} \geq 0$.  For $z \geq  - \epsilon$ we have that,
\begin{align} 
 2\frac{1+\mu}{ \mu + e^{-\tau z}}-1 \geq&  2\frac{1+\mu}{ \mu + e^{\tau\epsilon}}-1 \geq0.
\end{align}
We also observe that, for $ - \epsilon \leq z\leq 0$, 
\begin{align} 
d_{\mu,\tau} &=  \left[2\frac{1+\mu}{ \mu + e^{-\tau z}} - 1\right] - \left[\epsilon^{-1}z+1\right]   =  \frac{-\hat{h}(z)}{ \mu + e^{-\tau z}} .
\end{align}
We proceed to show that $ \hat{h}(z):=(\epsilon^{-1} z+2)( \mu + e^{-\tau z})  - 2-2\mu \leq 0$ holds for  $ - \epsilon \leq z\leq 0$.  This is established from the following sequence of implications:
\begin{align} 
\hat{h}'(z) &=  \epsilon^{-1}\mu + \left( \epsilon^{-1}- \epsilon^{-1}\tau z-2\tau\right) e^{-\tau z} \nonumber\\
       &\geq  \epsilon^{-1}\mu + \left ( \epsilon^{-1}-2\tau\right) e^{-\tau z}\nonumber\\ 
       &\geq  \epsilon^{-1}\mu. 
\end{align}
Here, the first inequality follows since $ z\leq 0$ and the second inequality follows because of the condition $\tau \leq  \frac{1}{2}\epsilon^{-1}$. 
Since $\hat{h}(0) =0$,  we have that $\hat{h}(z) \leq 0$ for$ - \epsilon \leq z\leq 0$. For  $z \geq 0 $ we have, $d_{\mu,\tau} =   \left[2\frac{1+\mu}{ \mu + e^{-\tau z}} - 1\right] -1 \geq  0$. $\Box$
\end{proof}

This result shows that as $\epsilon\to 0$, the range of feasible $\tau$ that make SigVaR-P more conservative increases. We now establish an upper bound for the SigVar-DC error. 
\begin{proposition}
Assume $\mu,\tau\in\mathbb{R}_+$ satisfy $\mu\geq \bar{\mu}$ where $\bar{\mu}$ is the positive solution of ${\bar{\mu}-\log(2+\bar{\mu})}=1$ and $ \tau \geq \frac{1}{2}\epsilon^{-1} (\mu+1)$. We have that $d_{\mu,\tau} \leq \frac{2}{\mu}$. 
\label{thm:svarvsDC}
\end{proposition}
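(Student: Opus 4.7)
The plan is to establish the pointwise bound $d_{\mu,\tau}(z) \leq 2/\mu$ for every $z \in \mathbb{R}$ and then integrate against the law of $Z$ to conclude $d_{\mu,\tau} = \mathbb{E}[d_{\mu,\tau}(Z)] \leq 2/\mu$. The analysis splits naturally along the sign of $z$, since the DC ramp $\epsilon^{-1}([z+\epsilon]_+ - [z]_+)$ saturates at $1$ for $z \geq 0$ and equals $[z/\epsilon + 1]_+$ for $z \leq 0$.

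For $z \geq 0$, the DC ramp equals $1$ while
\[
\psi_{ss}^{\mu,\tau}(z) \;=\; \frac{2(1+\mu)}{\mu + e^{-\tau z}} - 1 \;\leq\; \frac{2(1+\mu)}{\mu} - 1 \;=\; 1 + \frac{2}{\mu},
\]
using only $e^{-\tau z} \geq 0$. This directly gives $d_{\mu,\tau}(z) \leq 2/\mu$, and this regime is where the $2/\mu$ slack is attained in the limit $z \to \infty$.

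For $z \leq 0$, the goal reduces to $\psi_{ss}^{\mu,\tau}(z) \leq [z/\epsilon + 1]_+$, which would yield $d_{\mu,\tau}(z) \leq 0$. Setting $\gamma := \epsilon^{-1}$ and $\tau^\star := (\mu+1)\gamma/2 = (\mu+1)/(2\epsilon)$, this is exactly the pointwise inequality already established inside the proof of Proposition \ref{thm:svarvscvar} under the hypothesis $\mu \geq \bar{\mu}$, evaluated at the boundary value $\tau = \tau^\star$. The only gap is that our hypothesis allows any $\tau \geq \tau^\star$. I would close this gap via monotonicity: for $z \leq 0$ one has $-\tau z \geq 0$, so the kernel $2(1+\mu)/(\mu + e^{-\tau z}) - 1$ is nonincreasing in $\tau$, whence $\psi_{ss}^{\mu,\tau}(z) \leq \psi_{ss}^{\mu,\tau^\star}(z) \leq [z/\epsilon + 1]_+$.

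The main obstacle is reusing Proposition \ref{thm:svarvscvar}, whose proof is written under the equality $\tau = (\mu+1)\gamma/2$; the monotonicity step above is essential for bridging equality and inequality. As an alternative, the algebraic chain in that proof can be re-run verbatim with $\tau \geq (\mu+1)\gamma/2$, since the step $\gamma(\mu+1) - 2\tau \leq 0$ combined with $z_c \leq 0$ still forces the offending linear term to be nonnegative, after which the subsequent manipulations and the condition $\mu - 1 - \log(2+\mu) \geq 0$ conclude the argument unchanged. Either route yields $d_{\mu,\tau}(z) \leq 2/\mu$ pointwise, and taking expectation completes the proof.
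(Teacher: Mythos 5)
Your proof is correct and follows essentially the same route as the paper: the $z\geq 0$ case via the trivial bound $\psi_{ss}^{\mu,\tau}(z)\leq 1+\frac{2}{\mu}$ against the saturated DC ramp, and the $z\leq 0$ case by reducing to the pointwise inequality from the proof of Proposition \ref{thm:svarvscvar} with $\gamma=\epsilon^{-1}$. Your explicit monotonicity-in-$\tau$ bridge (the kernel is nonincreasing in $\tau$ when $z\leq 0$) is a genuine improvement, since the paper merely says to ``follow the derivation'' of Proposition \ref{thm:svarvscvar}, which is proved under the equality $\tau=\frac{\mu+1}{2}\gamma$ rather than the inequality $\tau\geq\frac{1}{2}\epsilon^{-1}(\mu+1)$ assumed here.
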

\begin{proof}   
We proceed by proving that $d_{\mu,\tau} \leq \frac{2}{\mu}$ holds for any $z\in\mathbb{R}$ if $\mu,\tau$ satisfy the conditions of the proposition. If $z < - \delta$, where $\delta := \frac{1}{\tau} \log(2+\mu)$, we have that $2\frac{1+\mu}{ \mu + e^{-\tau z}} - 1<0$  and, consequently,  $d_{\mu,\tau}\leq0$.  For  $- \epsilon \leq z < 0 $, we can follow the derivation of Proposition \ref{thm:svarvscvar} to prove that $d_{\mu,\tau}\leq0$. For  $z \geq 0 $ we have that $d_{\mu,\tau} =    \left[2\frac{1+\mu}{ \mu + e^{-\tau z}} - 1\right] - 1 \leq \frac{2}{\mu}$. The result follows. $\Box$

\end{proof}
This result shows that improving the quality of the DC-P  approximation (by setting $\epsilon\to 0$) corresponds to setting $\mu,\tau\to \infty$ for SigVaR-P (e.g., by using $\tau(\mu)=\theta(\mu+1)$ with $\theta=\frac{1}{2}\epsilon^{-1}$).  %The results also indicate that it is possible to overcome the computational limitations associated to DC-P. 

\subsection{Relationship with SS-P}
The following results compare the solutions of SigVaR-P and SS-P. We show that there exist parameters of SigVaR-P that provide an approximation of CC-P that is at least as good as that of SS-P.

\begin{proposition}
Assume that $\mu,\tau\in\mathbb{R}_+$ satisfy $\tau = \frac{1}{\rho}$, $\mu =   \frac{2+\rho m_1}{\rho m_2}$. We have that $\mathcal{X}^{\rho,m_1, m_2}_{sm}(\alpha)\subseteq \mathcal{X}_{ss}^{\mu,\tau}(\alpha)$ and $\varphi_{ss}^{\mu,\tau_\alpha}(\alpha) \leq \varphi^{\rho,m_1,m_2}_{sm}(\alpha)$. 
\label{thm:svarvssmooth}
\end{proposition}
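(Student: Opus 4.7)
The plan is to reduce the feasibility inclusion to a pointwise inequality between the two approximating functions. Since the constraint defining each feasible set is of the form $\mathbb{E}[\psi(f(x,\Xi))] \leq \alpha$, it suffices to show that under the stated parameter choices
\begin{equation*}
\psi_{ss}^{\mu,\tau}(z) \leq \psi_{sm}^{\rho,m_1,m_2}(z) \quad \text{for all } z\in\mathbb{R}.
\end{equation*}
Monotonicity of expectation then gives $\mathbb{E}[\psi_{ss}^{\mu,\tau}(f(x,\Xi))] \leq \mathbb{E}[\psi_{sm}^{\rho,m_1,m_2}(f(x,\Xi))]$, so any $x$ feasible for SS-P is feasible for SigVaR-P, yielding $\mathcal{X}^{\rho,m_1, m_2}_{sm}(\alpha)\subseteq \mathcal{X}_{ss}^{\mu,\tau}(\alpha)$. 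The objective bound $\varphi_{ss}^{\mu,\tau}(\alpha) \leq \varphi^{\rho,m_1,m_2}_{sm}(\alpha)$ follows immediately because minimizing the same objective over a larger feasible set cannot increase the optimal value.

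For the pointwise inequality, I would split on the sign of the argument inside the $[\cdot]_+$ in $\psi_{ss}^{\mu,\tau}$. Because $\psi_{sm}^{\rho,m_1,m_2}(z) \geq 0$ for all $z$ (both numerator and denominator are positive), whenever $2\frac{1+\mu}{\mu + e^{-\tau z}} - 1 < 0$ we have $\psi_{ss}^{\mu,\tau}(z)=0 \leq \psi_{sm}^{\rho,m_1,m_2}(z)$ trivially. In the remaining case $2\frac{1+\mu}{\mu+e^{-\tau z}}-1 \geq 0$, the $[\cdot]_+$ is inactive and the inequality to be proved is a pure algebraic one in $z$.

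Substituting $\tau = 1/\rho$ and $\mu = (2+\rho m_1)/(\rho m_2)$, introducing the shorthand $u := \rho m_2 e^{-z/\rho} > 0$, and rewriting both sides with common denominators, the inequality reduces to
\begin{equation*}
\frac{2\rho m_2 + 2 + \rho m_1 - u}{2+\rho m_1 + u} \;\leq\; \frac{1+\rho m_1}{1+u}.
\end{equation*}
Since both denominators are strictly positive, cross-multiplication gives an equivalent polynomial inequality in $u$. The hard (and main) step is to recognize that the difference (RHS minus LHS after clearing denominators) can be rearranged into the form
\begin{equation*}
(u - \rho m_2)^2 \;+\; \rho(m_1 - m_2)\bigl(2 + \rho(m_1+m_2)\bigr),
\end{equation*}
which is manifestly non-negative under the standing SS assumption $0 < m_2 \leq m_1$. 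I expect this algebraic re-bracketing to be the only non-routine piece; once it is identified, the chain of implications is purely mechanical. Combining the two cases then delivers the pointwise inequality, and hence both conclusions of the proposition.
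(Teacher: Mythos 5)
Your proposal is correct and follows essentially the same route as the paper's proof: reduce to the pointwise domination $\psi_{ss}^{\mu,\tau}(z)\leq \psi_{sm}^{\rho,m_1,m_2}(z)$, dispose of the case where the $[\cdot]_+$ is inactive, and verify the remaining algebraic inequality by exhibiting the cleared-denominator difference as a perfect square plus a term that is nonnegative because $m_2\leq m_1$ (your re-bracketing $(u-\rho m_2)^2+\rho(m_1-m_2)\bigl(2+\rho(m_1+m_2)\bigr)$ is the same quantity the paper writes as $(1+\rho m_1)^2-(1+\rho m_2)^2+(\rho m_2)^2(e^{-z/\rho}-1)^2$). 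The identity you conjecture does check out, so no gap remains.
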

\begin{proof}
We proceed by proving that any feasible point $x\in \mathcal{X}^{\rho,m_1, m_2}_{sm}(\alpha)$ of SS-P is a feasible point for SigVaR-P provided that $\mu,\tau$ satisfy the conditions of the proposition. This would imply that we can always find $\mu,\tau$ such that $\mathcal{X}^{\rho,m_1, m_2}_{sm}(\alpha)\subseteq \mathcal{X}_{ss}^{\mu,\tau}(\alpha)$ and $\varphi_{ss}^{\mu,\tau_\alpha}(\alpha) \leq \varphi^{\rho,m_1,m_2}_{sm}(\alpha)$. It suffices to show that $\frac{1+\rho m_1}{1+\rho m_2 e^{-z/\rho }} \geq [2\frac{1+\mu}{ \mu + e^{-\tau z}} - 1]_{+}$ holds for any $z\in\mathbb{R}$. If $z < - \delta$, where $\delta := \frac{1}{\tau} \log(2+\mu)$, we have that $2\frac{1+\mu}{ \mu + e^{-\tau z}} - 1<0$  and, consequently,  $\frac{1+\rho m_1}{1+\rho m_2 e^{-z/\rho }} \geq [2\frac{1+\mu}{ \mu + e^{-\tau z}} - 1]_{+}$.  For $z \geq - \delta$ we have
\begin{subequations}
\begin{align} 
& \frac{1+\rho m_1}{1+\rho m_2 e^{-z/\rho }} - \left[2\frac{1+\mu}{ \mu + e^{-\tau z}} - 1\right]_{+} \\
=& \frac{1+\rho m_1}{1+\rho m_2 e^{-z/\rho }} - 2\frac{1+\mu}{ \mu + e^{-\tau z}} + 1  \\
=&  \frac{1+\rho m_1}{1+\rho m_2 e^{-z/\rho }} - \frac{4+2\rho m_1+2\rho m_2 }{ 2+\rho m_1  + \rho m_2 e^{-z/\rho}} + 1  \\
=&  \frac{ (1+\rho m_1)^2 - (1+\rho m_2)^2  + (\rho m_2)^2 (e^{-z/\rho }-1)^2   }{(1+\rho m_2 e^{-z/\rho}) \cdot (2+\rho m_1  + \rho m_2 e^{-z/\rho })}    \\
\geq& 0
\end{align}
\end{subequations}
where the first equality holds since $2\frac{1+\mu}{ \mu + e^{-\tau z}} - 1\geq 0$, the second equality follows by substituting  $\tau = \frac{1}{\rho}$ and $\mu =   \frac{2+\rho m_1}{\rho m_2}$, and the inequality holds since $m_2 \leq m_1$. $\Box$
\end{proof}

\begin{corollary}
Assume that $\mu,\tau\in\mathbb{R}_+$ satisfy $\tau = (1+\mu)\theta$, $\theta=\frac{m_2}{2+\rho m_1 + \rho m_2}$, and $\mu \geq \frac{2+\rho m_1}{\rho m_2}$, we have that $\mathcal{X}^{\rho,m_1, m_2}_{sm}(\alpha)\subseteq \mathcal{X}_{ss}^{\mu,\tau}(\alpha)$ and $\varphi_{ss}^{\mu,\tau}(\alpha) \leq \varphi^{\rho,m_1,m_2}_{sm}(\alpha)$. 
\label{col:svarvssmooth}
\end{corollary}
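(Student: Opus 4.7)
The plan is to reduce the corollary to Proposition \ref{thm:svarvssmooth} combined with the monotonicity result in Lemma \ref{thm:monotonical}. The key observation is that the parameter choice in Proposition \ref{thm:svarvssmooth}, namely $\tau_0 := 1/\rho$ and $\mu_0 := (2+\rho m_1)/(\rho m_2)$, lies exactly on the curve $\tau = (1+\mu)\theta$ with the $\theta$ specified in the corollary. Indeed, $(1+\mu_0)\theta = \frac{2+\rho m_1 + \rho m_2}{\rho m_2}\cdot\frac{m_2}{2+\rho m_1 + \rho m_2} = \frac{1}{\rho} = \tau_0$, so the endpoint of the corollary's parameter family coincides with the specific pair handled by Proposition \ref{thm:svarvssmooth}.

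Given this alignment, the first step is to invoke Proposition \ref{thm:svarvssmooth} at $(\mu_0,\tau_0)$ to obtain $\mathcal{X}^{\rho,m_1,m_2}_{sm}(\alpha) \subseteq \mathcal{X}_{ss}^{\mu_0,\tau_0}(\alpha)$ and $\varphi_{ss}^{\mu_0,\tau_0}(\alpha) \leq \varphi^{\rho,m_1,m_2}_{sm}(\alpha)$. The second step is to apply Lemma \ref{thm:monotonical} with this same $\theta$: for any $\mu \geq \mu_0$ (equivalently, $\mu \geq \frac{2+\rho m_1}{\rho m_2}$ as assumed), setting $\tau := (1+\mu)\theta$, the lemma yields $\mathcal{X}_{ss}^{\mu_0,\tau_0}(\alpha) \subseteq \mathcal{X}_{ss}^{\mu,\tau}(\alpha)$ and $\varphi_{ss}^{\mu,\tau}(\alpha) \leq \varphi_{ss}^{\mu_0,\tau_0}(\alpha)$. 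Chaining the two steps gives the desired inclusion $\mathcal{X}^{\rho,m_1,m_2}_{sm}(\alpha) \subseteq \mathcal{X}_{ss}^{\mu,\tau}(\alpha)$, and the objective inequality $\varphi_{ss}^{\mu,\tau}(\alpha) \leq \varphi^{\rho,m_1,m_2}_{sm}(\alpha)$ follows either from the same chain or directly from feasible-set containment applied to the common objective $\varphi(x)$.

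There is essentially no hard step — the whole argument is a two-line composition once the endpoint calculation is noted. The only thing worth verifying carefully is that the $\theta$ given in the statement is exactly the one that places $(\mu_0,\tau_0)$ on the curve $\tau = (1+\mu)\theta$; once that algebraic identity is checked, Proposition \ref{thm:svarvssmooth} supplies the base case and Lemma \ref{thm:monotonical} propagates it to all larger $\mu$. If anything, the minor subtlety is making sure the monotonicity lemma can be applied: it requires $\mu^+ > \mu > 0$ with the common reparametrization $\tau(\mu) = (1+\mu)\theta$, which is precisely the setting here, with $\mu_0 > 0$ as the base and arbitrary $\mu \geq \mu_0$ as the larger parameter.
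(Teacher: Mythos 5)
Your proposal is correct and is exactly the intended derivation: the paper states the corollary without proof immediately after Proposition \ref{thm:svarvssmooth}, and the implicit argument is precisely your two-step composition, namely that the pair $\mu_0=\frac{2+\rho m_1}{\rho m_2}$, $\tau_0=\frac{1}{\rho}$ from the proposition lies on the curve $\tau=(1+\mu)\theta$ for the stated $\theta$, after which Lemma \ref{thm:monotonical} extends the containment to all $\mu\geq\mu_0$. Your endpoint algebra checks out, and noting that the $\mu=\mu_0$ case is covered by the proposition itself (since the lemma requires strict inequality) closes the only small edge case.
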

%\begin{proof}
%$\tau = (1+\mu)     \frac{1}{\rho}$, $\mu =   \frac{2+\rho m_1}{\rho m_2}$ is equivalent to $\mu = \frac{2+\rho m_1}{\rho m_2}$ and $%\tau = (1+\mu) \frac{m_2}{2+\rho m_1 + \rho m_2}$. The result follows from \ref{thm:svarvssmooth}. $\Box$
%\end{proof}

\section{Computational Implementation} \label{sec:comp}

We use SAA to convert SigVar-P into a finite-dimensional NLP \cite{kleywegt2002sample}. We generate a set of realizations $\xi\in \Omega$ from $p_\Xi$. The total number of realizations is $S$. The SAA approximation is given by:
\begin{subequations}\label{eq:saa}
\begin{align}
\min \limits_{x \in  \mathcal{X}, z_\xi\in \mathbb{R}^S,\phi_\xi\in \mathbb{R}^S_+} \;\;  &\varphi(x)  \\
{\rm s.t.}  \;\;     & z_\xi = f(x,\xi),  \;\;  \xi\in\Omega  \\
					  & \phi_\xi \geq 2\frac{1+\mu}{ \mu + e^{- \tau z_\xi}}-1, \;\;  \xi\in\Omega \\
					  &{\frac{1}{|\Omega|}}\sum_{\xi\in\Omega}  \phi_\xi  \leq \alpha. 
\end{align}
\end{subequations}

Large values of $\tau$ will cause difficulty for the NLP solver due to the high nonlinearity of the sigmoid function. For example, the first derivative of $2\frac{1+\mu}{ \mu + e^{- \tau z_\xi}}$ with respect to $z_\xi$ is $\mathcal{O}(\tau)$ and thus becomes increasingly steep as $\tau$ is increased. Moreover, the second derivative is $\mathcal{O} (\tau^2)$.  Consequently, we propose a scheme to solve a sequence of SigVaR approximations of increasing quality and with this achieve more robustness. The scheme (called {\tt SigVaR-Alg}) begins by finding a solution of the SAA approximation of the CVaR-P.  The SAA approximation of CVaR-P is:
\begin{subequations}\label{eq:saacvar}
\begin{align}
\min \limits_{x \in  \mathcal{X}, z_\xi\in \mathbb{R}^S,\phi_\xi\in \mathbb{R}^S_+,t\in\mathbb{R}} \;\;  &\varphi(x)  \\
{\rm s.t.}  \;\;     & z_\xi = f(x,\xi),  \;\;  \xi\in\Omega  \\
					  & \phi_\xi \geq z_\xi-t, \;\;  \xi\in\Omega \\
					  &{\frac{1}{|\Omega|}} \sum_{\xi\in\Omega}  \phi_\xi  \leq {-t} \alpha. 
\end{align}
\end{subequations}

\begin{algorithm}[!htb] \caption{\tt SigVaR-Alg}
\begin{algorithmic}[]
\item[\bf{1.} Initialize]
\STATE Given $\lambda >1$, $\alpha\in (0,1]$, and target $\mu^*\in \mathbb{R}_+$. 
\STATE Initialize iteration index $\ell \leftarrow 0$.
\STATE Solve CVaR problem \eqref{eq:saacvar} and set $\gamma\leftarrow-\frac{1}{t_c(\alpha)}$, $x_\ell^*\leftarrow x_c(\alpha)$, and $\varphi_\ell^* \leftarrow \varphi_c(\alpha)$.
\STATE Set $\mu_\ell\leftarrow\bar{\mu}$,  $\tau_\ell\leftarrow\frac{\mu_\ell+1}{2}\gamma$, where $\bar{\mu}$ is positive solution of ${\bar{\mu}-\log(2+\bar{\mu})}=1$.
\STATE Update iteration index $\ell \leftarrow \ell+1$.
\item[\bf{2.} Solve SigVar-P]
\STATE Use $x_{\ell-1}^*$ as initial guess and solve SigVaR-P \eqref{eq:saa} with $\mu_\ell,\tau_\ell$. 
\STATE Set  $x_{\ell}^*\leftarrow x_{ss}^{\mu_\ell,\tau_\ell}(\alpha)$ and $\varphi_{\ell}^*\leftarrow \varphi_{ss}^{\mu_\ell,\tau_\ell}(\alpha)$.
\IF{$\mu_\ell\geq \mu^*$}
      \STATE   Go to Step {\bf 4}.
\ELSE 
	 \STATE   Go to Step {\bf 3}.     
\ENDIF
\item[\bf{3.} Update parameters]
\STATE{ Set $\mu_{\ell+1} \leftarrow \lambda \cdot \mu_{\ell}$ and $\tau_{\ell+1}\leftarrow\frac{\mu_{\ell+1}+1}{2}\gamma$. }
\STATE Update iteration index $\ell \leftarrow \ell+1$ and return to Step {\bf 2}.
\item[\bf{4.} Stop with $x_{\ell}^*$]
\end{algorithmic}
\end{algorithm}

From Proposition \ref{thm:svarvscvar}, we have that $\varphi_1^* \leq \varphi_{0}^*$ holds  and from Lemma \ref{thm:monotonical} we have that $\varphi_{\ell+1}^* \leq \varphi_{\ell}^* $ holds for all $\ell\geq 1$ (provided that the NLPs are solved to global optimality). However, for the numerical studies in Section \ref{sec:results}, the SigVaR approximation at each iteration is solved to local optimality because solving a large-scale NLP to global optimality is computationally intractable.

%\section{NLP Solution}\label{sec:computation}
\section{Numerical Studies}\label{sec:results}
The first two case studies are small-scale linear problem; consequently, exact and tractable MILP reformulations can be used and provide best performance. We use two small-scale studies to illustrate the theoretical properties of SigVaR. The next two case studies include a wind turbine optimization study and a flare system optimization study, which are large-scale and highly nonlinear. For these two case studies, exact mixed integer reformulations are intractable. We use the large-scale studies to illustrate the practical benefits of SigVaR.

%\pagebreak 

\subsection{Analytical Example}
Consider the following CC-P:
\begin{subequations}
\label{example_eqns}
\begin{align}
\min \limits_{x\in \mathbb{R} } \;\;  &x  \label{example_eqns1} \\
{\rm s.t.}  \;\; &\mathbb{P}( \Xi \leq x) \geq 1- \alpha,  \label{example_eqns2}
\end{align}
\end{subequations}
with $\Xi \sim \mathcal{U}(0,1)$. The optimal objective value and solution are $\varphi(\alpha)=x^*(\alpha)=1-\alpha$ and we note that $\mathbb{P}( \Xi \leq x^*(\alpha))=1-\alpha$. This implies $1-\alpha=F(x^*(\alpha))=Q_{1-\alpha}(\Xi)=x^*(\alpha)$. We handle the CC \eqref{example_eqns2} using the VaR (exact), the CVaR approximation \eqref{cvar_formulation}, the EVaR approximation \eqref{evar_formulation}, and the SigVaR approximation \eqref{svar_formulation}.The optimal solution and objective values obtained with these approaches are, respectively,  $\text{VaR}_{1-\alpha}(\Xi)=Q_{1-\alpha}(\Xi)$, $\text{CVaR}_{1-\alpha}(\Xi)$, $\text{EVaR}_{1-\alpha}(\Xi)$, and  $\text{SigVaR}_{1-\alpha}^{\mu,\tau}(\Xi)$. Moreover, $\text{VaR}_{1-\alpha}(\Xi) = 1- \alpha$, $\text{CVaR}_{1-\alpha}(\Xi)=\frac{1}{2}(2-\alpha)$, and $\text{EVaR}_{1-\alpha}(\Xi)=\inf \limits_{t > 0} \{t \log(te^{t^{-1}} -t) -t \log \alpha  \}$.   For the case of SigVar we have that, for $\alpha \geq \frac{2+2\mu}{\mu\tau} \log(\frac{2+\mu+\mu e^\tau}{2+2\mu}) - 1$, 
\begin{equation} 
\text{SigVaR}_{1-\alpha}^{\mu,\tau}(\Xi) = \tau^{-1} \log\left(\frac{\mu e^\tau-\mu\beta}{\beta-1}\right) 
\end{equation}
where $\beta=e^{\frac{(\alpha+1]\mu\tau}{2+2\mu}}$. Otherwise, we have that
\begin{equation}
\text{SigVaR}_{1-\alpha}^{\mu,\tau}(\Xi) {=}\inf_{t\in\mathbb{R}}\left\{\frac{2+\mu}{\mu\tau} \log(2+\mu) + \frac{2+2\mu}{\mu\tau} \log\left(\frac{\mu e^{\tau(1-t)} + 1}{2+2\mu}\right) + t - 1 \leq \alpha \right\}.
\end{equation}

The optimal objective values for all approaches as a function of $\alpha$ are shown in Figure \ref{fig.example}.  As predicted by the properties of SigVaR, we have that $\text{VaR}_{1-\alpha}(\Xi) \leq \text{SigVaR}_{1-\alpha}^{\mu,\tau}(\Xi)$ for all $\alpha$.

\begin{figure}[tbhp]
\begin{center}
\includegraphics[width=0.6\textwidth]{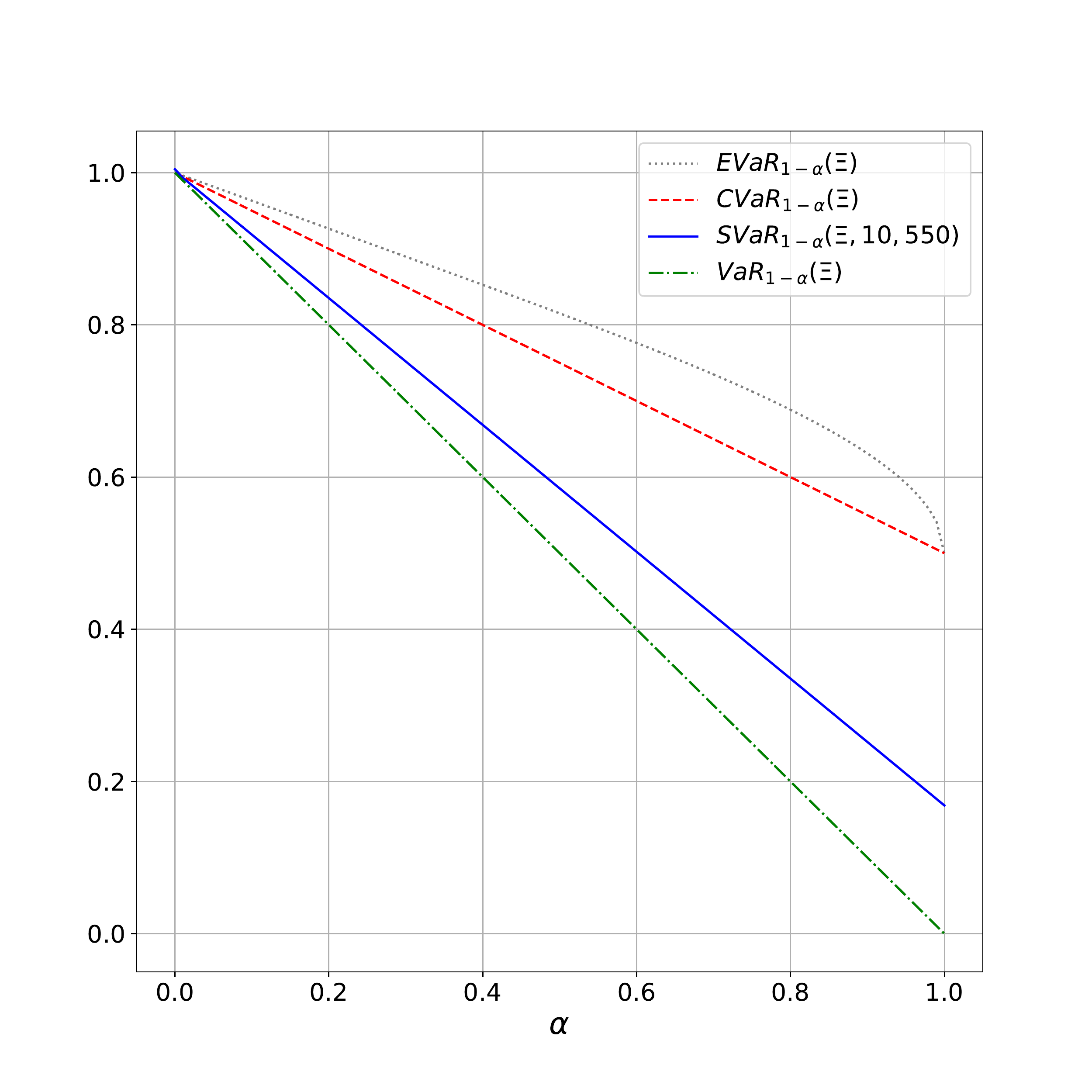}\vspace{-0.2in}
\caption{Optimal objectives obtained with VaR, CVaR, EVaR, and SigVaR for analytical example.}
\label{fig.example}
\end{center}
\end{figure}

We have that $Z(x)=\Xi-x\sim \mathcal{U}(-x,1-x)$ for $x\in\mathbb{X}$. Consequently, the constant $L=\sup_{x\in \mathcal{X}}L(x) = \sup_z \{p_{Z(x)}(z)\}=1$ satisfies $\mathbb{P}(-\delta\leq Z(x) < 0) \leq L\delta$ for all $x\in \mathcal{X}$. From Lemma \ref{bounderror}, the approximation error of the SigVaR function is bounded as $\epsilon_{\mu,\tau} \leq\frac{\log(2+\mu)L}{\tau} +  \frac{2}{\mu} =  \frac{\log(12)}{550} +  \frac{2}{550} = 0.204$. We note that this is an upper bound of the empirical error $\epsilon^{\mu,\tau}=0.169$ observed in Figure \ref{fig.example} and computed by $\epsilon_{\mu,\tau}=\text{SVaR}_{1-\alpha}^{\mu,\tau}(\Xi)-\text{VaR}_{1-\alpha}(\Xi)$ (vertical distance at each $x^*(\alpha)=1-\alpha$).  

From the solution of the CVaR approximation we obtain that $t_c(\alpha) =-\frac{\alpha}{2}<0$ and thus $\gamma_\alpha=-\frac{1}{t_c(\alpha)}=\frac{2}{\alpha}$. Proposition \ref{thm:svarvscvar} predicts that for $\mu=10,\tau=550$ and $\alpha = 0.02$,  $\text{SigVaR}_{1-\alpha}^{\mu,\tau}(\Xi) \leq \text{CVaR}_{1-\alpha}(\Xi)$. This prediction is verified in Figure \ref{fig.example}, which shows that empirically for $\alpha>0.006$, $\text{SigVaR}_{1-\alpha}^{\mu,\tau}(\Xi) \leq \text{CVaR}_{1-\alpha}(\Xi)$. The extreme conservatism of CVaR and EVaR becomes obvious at large values of $\alpha$.  In particular, at $\alpha=1$ we see that $\text{SigVaR}_{1-\alpha}^{\mu,\tau}(\Xi)=0.169$ and $\text{CVaR}_{1-\alpha}(\Xi)$=0.5, which illustrates that the quality of the approximation can be substantially improved. 

In practice, it is very rare that analytical solutions can be obtained. Therefore, We now illustrate numerical behavior of {\tt SigVar-Alg} (in our experiments we use SAA with 1,000 scenarios). The CC-P in this case can be cast exactly as an MILP, CVaR-P is cast as an LP, and SigVaR-P and as NLP. The MILPs are solved with the solver  {\tt SCIP} and the LPs and NLPs are solved with {\tt IPOPT}. Lemma \ref{thm:monotonical}  shows that $\text{SigVaR}_{1-\alpha}^{\mu,\tau(\mu)}(\Xi)$ becomes less conservative for increasing $\mu$, which is verified in Figure \ref{fig:examplealpha} for $\alpha=0.5$ and $\alpha=0.05$.  For $\alpha=0.5$, the solution of the MILP formulation is 0.504, which is close to the analytical solution of 0.5. {\tt SigVaR-Alg} first finds the solution of CVaR approximation, which is 0.747. At iteration 1, we solve with SigVaR approximation with $\mu=2.5$ and $\tau=7.2$, and find a solution of 0.719. After 8 iterations, we solve a SigVaR approximation with $\mu=321$ and $\tau=662$ and find a a solution of 0.515. The gap between MILP formulation and SigVaR is only 4\% of the gap between CC-P and CVaR-P.  For $\alpha=0.1$, the gap is 36\% but we also see that the gap is more difficult to close with SigVaR.

%the numerical solution of MILP formulation is 0.882, which is close to the analytical solution 0.9. Algorithm \ref{alg.svar} first finds the solution of CVaR approximation 0.942. Then in iteration 1, it solves with SigVaR approximation with $\mu=2.5$ and $\tau=30$, and finds the optimal solution of 0.934. After 5 iterations, Algorithm \ref{alg.svar} solves the SigVaR approximation with $\mu=40$ and $\tau=351$ and finds the optimal solution of 0.894. The gap between MILP formulation and SigVaR is only 20\% of the gap between MILP formulation and CVaR. 

\begin{figure}[tbhp]
\centering
\subfloat{\includegraphics[width=0.5\textwidth]{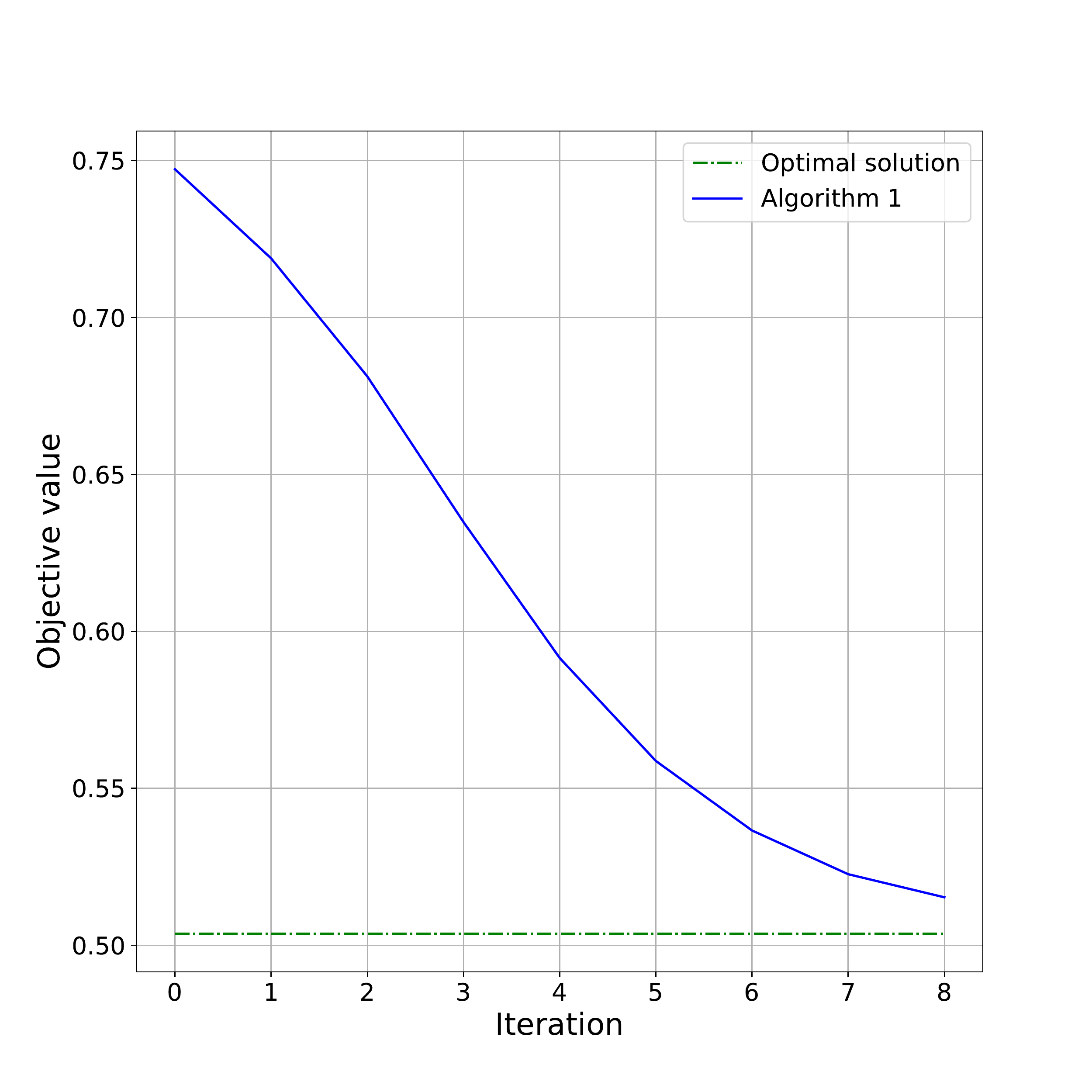}\label{fig:example0.5}}
\subfloat{\includegraphics[width=0.5\textwidth]{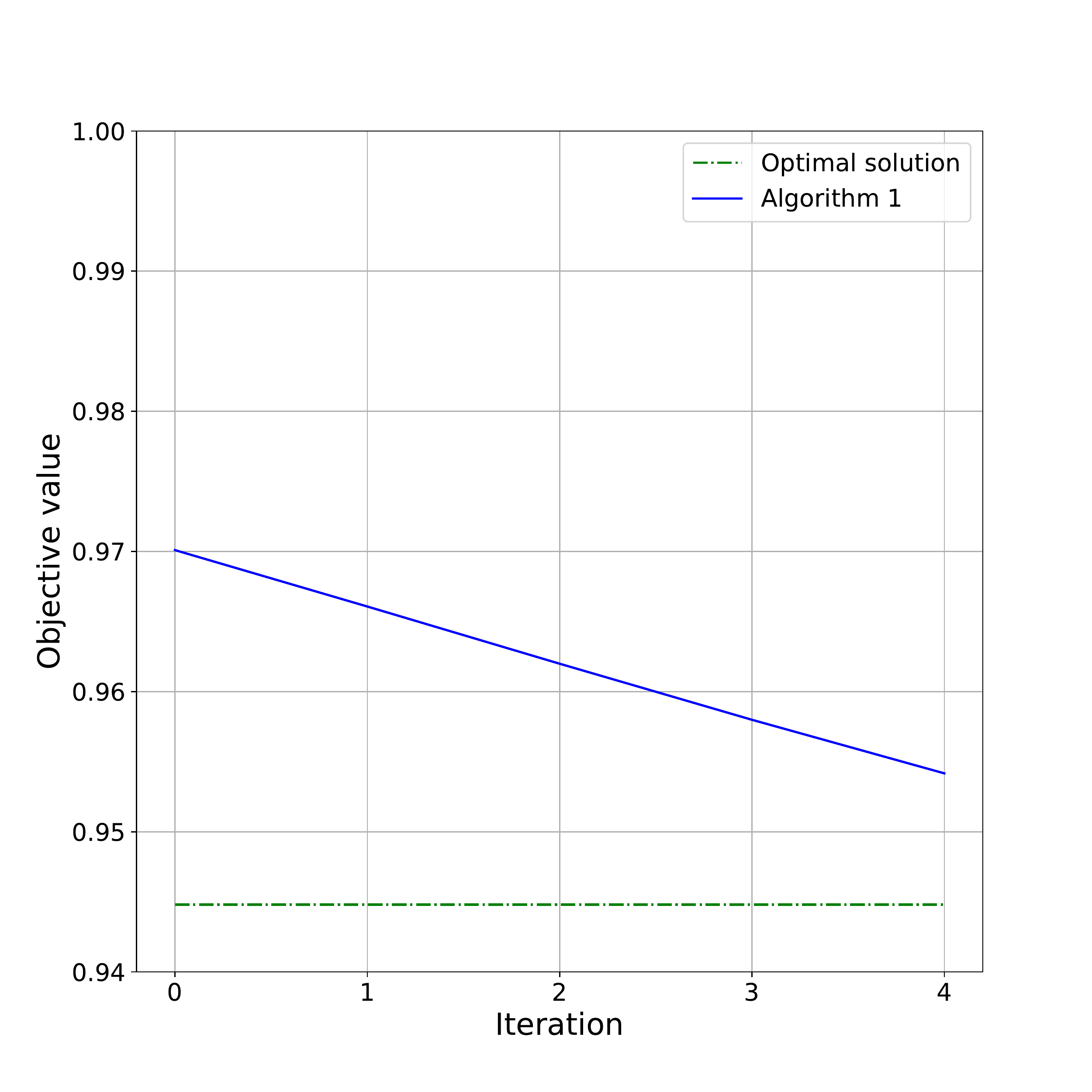}\label{fig.example0.05}}
\vspace{-0.2in}
\caption{Performance of SigVar on analytical example with $\alpha = 0.5$ (left) and $\alpha = 0.05$ (right).}
\label{fig:examplealpha}
\end{figure}

\subsection{Farmer Problem}
We consider modified version of the classical farmer problem \cite{birge2011introduction}. In this problem, the farmer needs to decide how much land to allocate to grow wheat, corn, and beets while considering the uncertainty on crop yields. The farmer has the option to buy/sell crops to satisfy contracts and maximize revenue (minimize cost). The formulation is given by:
\begin{subequations}\label{eq:Farmer}
\begin{align}
\min_{x,y_j(\cdot),w_j(\cdot)} &\quad \varphi=\mathbb{E}\left[ f(\Xi) \right]\\
\textrm{s.t.} &\sum_{j\in\mathcal{P}}x_j\leq \bar{x}\\
& \tau_j(\Xi) x_j + y_j(\Xi) -w_j(\Xi)\geq \beta_j,j\in\mathcal{P}\quad\textrm{a.s.} \\
& f(\Xi) = \sum_{j\in\mathcal{P}   }\left(\gamma^x_j x_j+\gamma^y_j y_j(\Xi) - \gamma^w_jw_j(\Xi)\right)\quad \textrm{a.s.}  \\
&\mathbb{P}\left(f(\Xi) \leq \bar{f} \right) \geq 1- \alpha \label{Farmer_eqns1}\\
& 0\leq w_j(\Xi)\leq \bar{w}_j,\; 0\leq y_j(\Xi)\leq \bar{y}_j,j\in\mathcal{P}\quad\textrm{a.s.}
\end{align}
\end{subequations}
where $x_j$ denotes the land allocated to each crop at cost $\gamma_j^x$, $y_j(\xi)$ represents the crops bought at price $\gamma_j^y$,  $w_j(\xi)$ denotes the crops sold at price $\gamma_j^w$, $\mathcal{P}$ denotes the set of crops $\{\textrm{wheat,corn,beets}\}$,  $\tau_j(\xi)$ is the yield of crops, $\beta_j$ denotes demand contracts and $\bar{x},\bar{y}_\ell,\bar{w}_\ell$ represents capacities. Constraint \eqref{Farmer_eqns1} requires that the cost $f(\cdot)$ is lower than the threshold $\bar{f}$ with probability at least $1-\alpha$. We assume that  the yield of wheat and corn is constant, while the yield of beets follows a normal distribution $\mathcal{N}(20,5)$. We generate 1,000 scenarios from this distribution and we set $\alpha = 0.05$ and $\bar{f}= \$ 50,000$.  

The performance of {\tt SigVaR-Alg} is summarized in Table \ref{tb:farmer}. The solution of CC-P is obtained using the MILP formulation. As can be seen, the expected cost of the MILP formulation is \$-86431. The expected cost of CVaR approximation is \$-76455 (which is around 11.5\% higher than the optimal MILP cost). This is because, although \eqref{Farmer_eqns1} only requires the cost to be lower than the threshold with probability equal to larger than 0.95, the solution of CVaR formulation satisfies the constraint with probability 0.978. Figure \ref{fig:farm} shows the histogram of the cost obtained with CVaR, SigVaR, and MILP formulations. Here, it becomes obvious that CVaR can significantly distort the cost distribution due to high conservatism. From the solution of the CVaR approximation we obtain  $t_c(\alpha)=-5555<0$ and $\gamma=0.00018>0$. After 6 iterations, {\tt SigVaR-Alg} solves the SigVaR approximation with $\mu=80$ and $\tau=0.0073$ and finds a solution with an expected cost of \$-85472 (which is is around 1.1\% higher than the optimal MILP cost). The gap between the MILP and SigVaR formulations is only 9.6\% of the gap between the MILP and CVaR formulations. We also observe that, as the iterations proceed, the objective value of SigVaR-P decreases monotonically, $\mathbb{P}(f(\Xi)\leq  \bar{f})$ decreases, and $\text{VaR}_{1-\alpha}(f(\Xi))$ increases.  We can thus see that the SigVaR formulation can significantly reduce the conservatism of the CVaR solution. We acknowledge, however, that we are unable to close the gap further due to numerical instability of the NLP solver.  

\begin{table}[tbhp]
\caption{Performance of {\tt SigVaR-Alg} on farmer problem with $\alpha = 0.05$.}
\label{tb:farmer}
\centering
\begin{footnotesize}
\begin{tabular}{|c|c|c|c|c|c|}
\hline
$\ell$ 
&$\mu$
&$\tau$
& $\mathbb{E}[f(\Xi)]$
& $\text{VaR}_{1-\alpha}(f(\Xi))$
& $\mathbb{P}\left(f(x,\Xi)\leq  \bar{f} \right)$ \\
\hline
CVaR-P($\ell=0$) &  - & - & -76455      &-55601        &0.978  \\
1	   & 2.5    &0.00031      & -78396      &-54511        &0.974        \\
2      &5.0     &0.00054      &-80225       &-53484      & 0.969          \\
3      &10.0    &0.00098      &-82141       &-52408       &0.965    \\
4      &20.0    &0.00188      &-83659       &-51556       &0.959    \\
5      &40.0    &0.00367      &-84746        &-50945      &0.957       \\
6  	   &80.0    &0.00725       &-85472       &50538        &0.953          \\
CC-P    &   -  &  -   & -86431       & -50000  & 0.95  \\
\hline		
\end{tabular}
\label{tab:farm}
\end{footnotesize}
\end{table}

\begin{figure}[tbhp]
\centering
\subfloat{\includegraphics[width=0.33\textwidth]{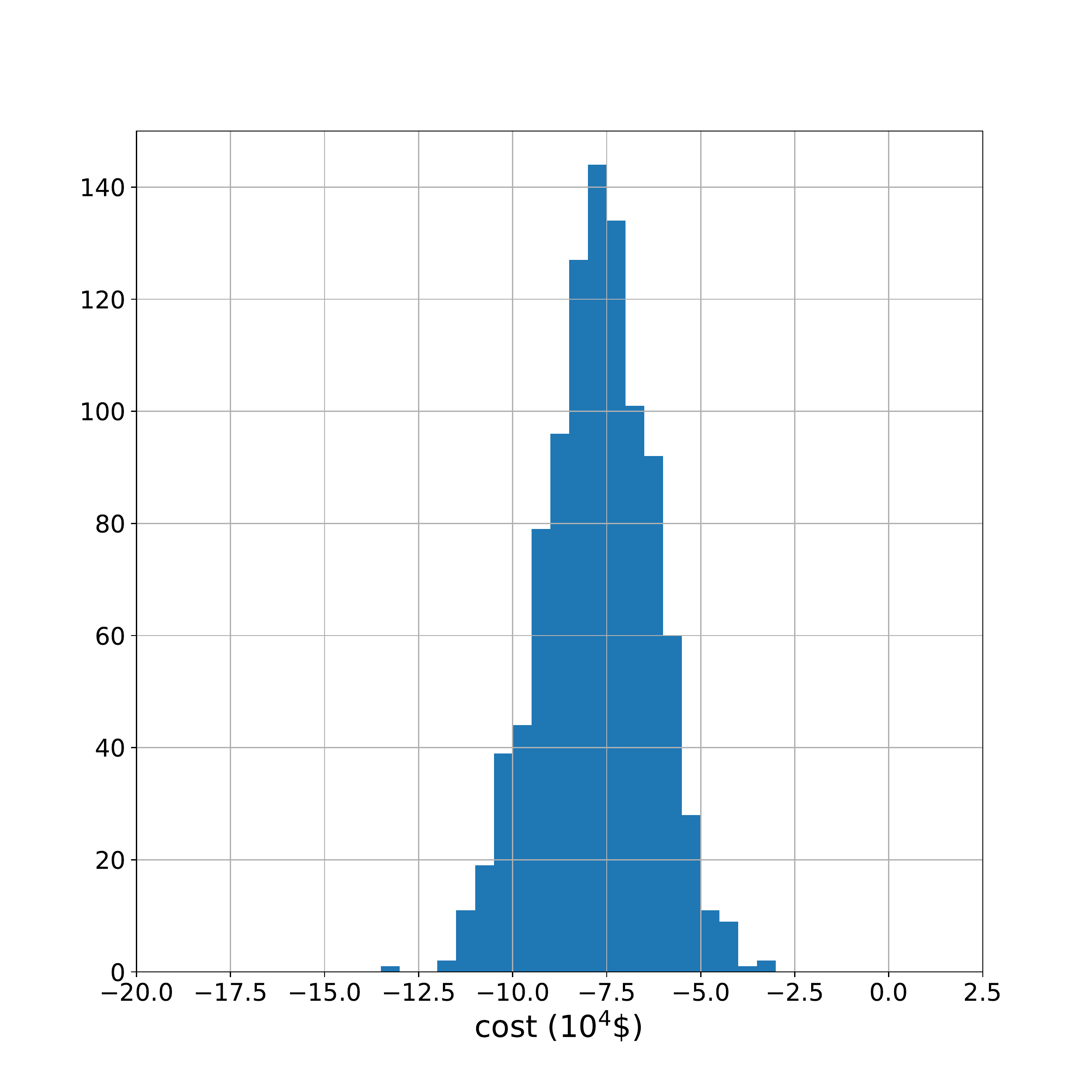}\label{fig:FarmCVaR}}
\subfloat{\includegraphics[width=0.33\textwidth]{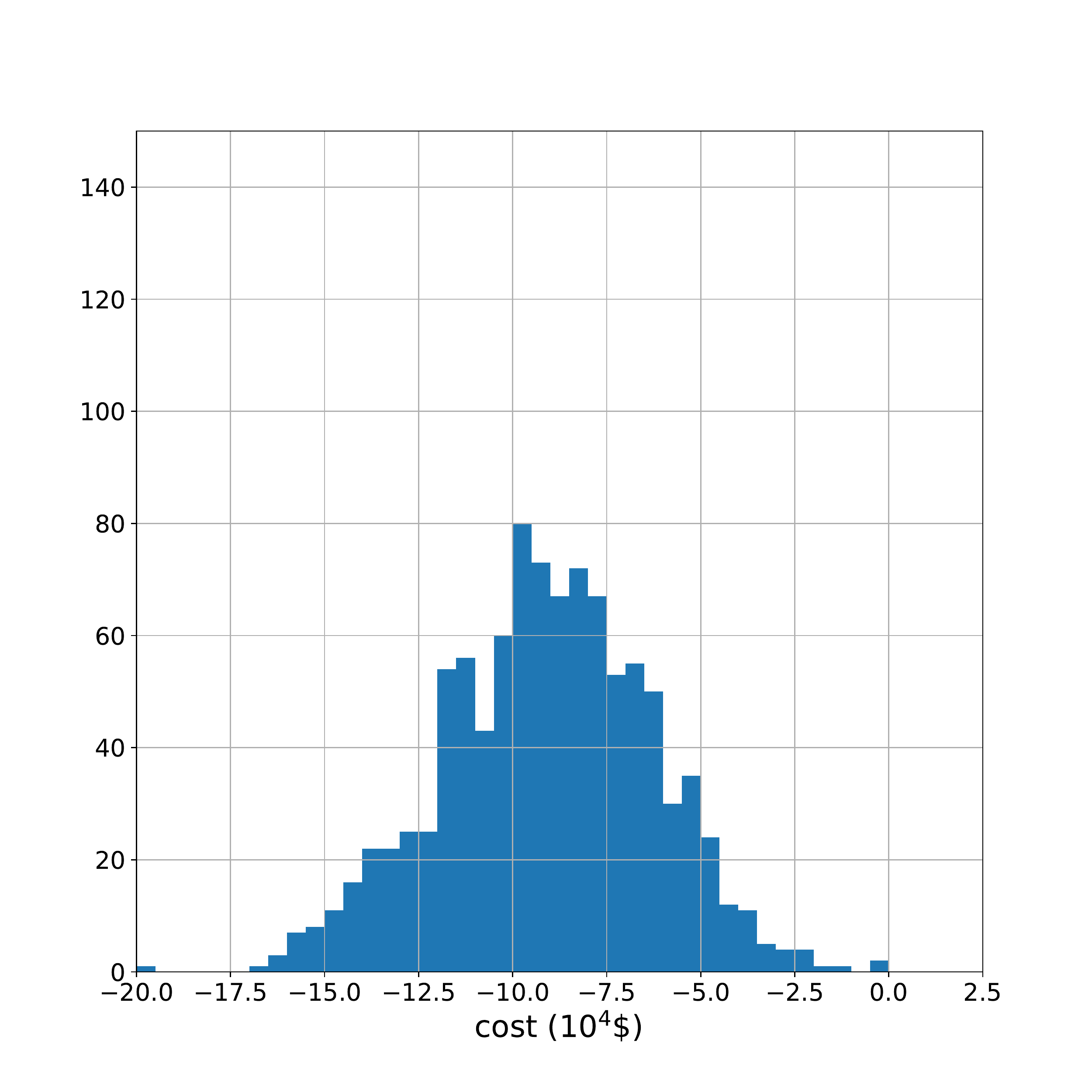}\label{fig:FarmSigVaR}}
\subfloat{\includegraphics[width=0.33\textwidth]{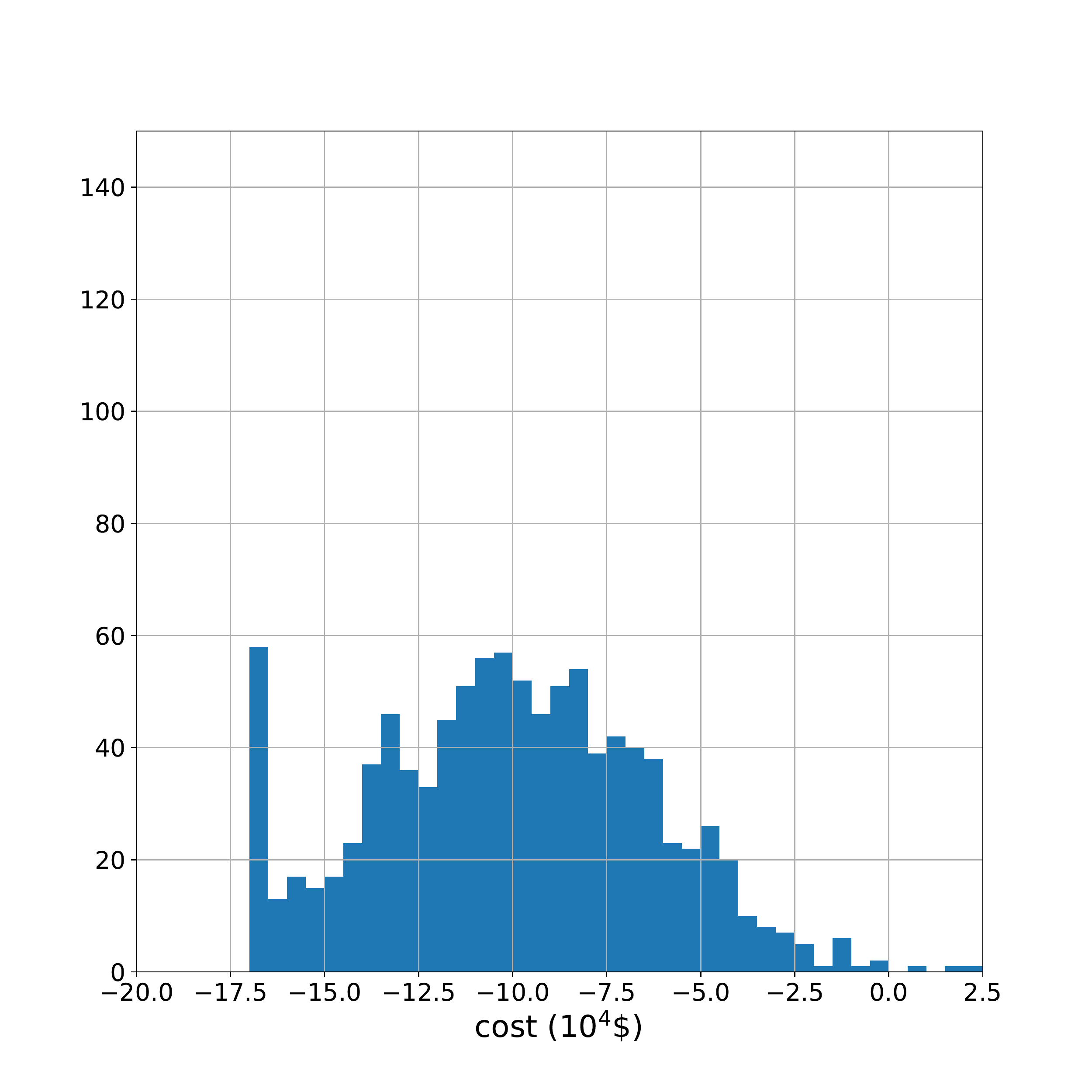}\label{fig:FarmVaR}}
\vspace{-0.1in}
\caption{Cost distribution using CVaR-P (left), SigVaR-P (middle) and CC-P (right) formulation.}	
\label{fig:farm}
\end{figure}

\subsection{Wind Turbine Optimization}
We now solve a large-scale CC-P that seeks to find optimal pitch and torque control policies for a wind turbine given uncertainty in wind speed conditions. The formulation seeks to maximize expected power and to satisfy a CC on the maximum mechanical load experienced by the wind turbine. We represent this problem in the following abstract form:
\begin{subequations}
\begin{align} 
\max_{u\in \mathcal{U}} &\; \;\varphi:=\mathbb{E}\left[\frac{1}{T}\int_{\mathcal{T}}y_P(t,\Xi)dt\right]   \\
\textrm{s.t.} &\; (y_P(\Xi, t),y_L(\Xi, t))=\mathcal{M}(u(t),u(t,\Xi),V(\Xi, t)),\; t \in \mathcal{T},\; \textrm{a.s.} \label{eq:turbinemodel}  \\
&\;\;  \mathbb{P}\left\{y_L^{max}(\Xi)\leq \bar{y}_L\right\}\geq 1- \alpha \label{eq:probcons1} \\
&\;\; y_L^{max}(\Xi)=\max_{t\in\mathcal{T}}\, y_L(t,\Xi),\; \textrm{a.s.}\\ 
&\;\; y_L(\Xi, t)\leq \hat{y}_L,\; t \in \mathcal{T},\; \textrm{a.s.} \label{eq:probcons2} 
\end{align}
\end{subequations}
where $t\in \mathcal{T}:=[0,T]$,  $V(\Xi, t)$ is the wind speed, $y_P(\Xi, t)$ is the wind turbine power, $y_L(\Xi, t)$ is the mechanical load with associated threshold $\bar{y}_L$. For a time horizon of ten minutes, we set the control actions for the first 10 seconds to be first stage variables $u(t)$ (the implemented control actions) and the rest to be second stage variables $u(t,\Xi)$ (the recourse control actions). Equation \eqref{eq:turbinemodel} is an abstract representation of a wind turbine model (which comprises nonlinear differential and algebraic equations). The model details are presented in \cite{windpaper}.  A {\tt Julia} model implementation along with all necessary data is available at \url{https://github.com/zavalab/JuliaBox/tree/master/WindSigVaR}. 

An important practical problem is that power maximization conflicts with the mechanical load experienced by the turbine (i.e., the higher the power extracted the higher the load). Consequently, it is  important to carefully trade-off these metrics so as to prevent putting the turbine at extreme mechanical risk. The probabilistic constraint \eqref{eq:probcons1} enforces that the probability that the peak load $y_L^{max}(\Xi)$ exceeds the threshold $\bar{y}_L$ is no more than $\alpha$. Constraint \eqref{eq:probcons2} enforces that the peak load never exceeds another (less conservative) threshold $\hat{y}_L$. In our experiments we set $\alpha=0.5, \bar{y}_L= 60$ MNm,  and $\hat{y}_L = 200$ MNm. 

To solve this problem, we discretize the dynamic model by using a Radau collocation scheme \cite{zavalathesis}. To accurately capture extreme loads we have found that it is necessary to discretize the model using a resolution of 0.5 seconds over 10 minutes, giving rise to 1,200 time steps. For an NLP with 230 scenarios (collected from real implementations), the total number of variables  is 5.5 million. The NLPs arising in this application were implemented in  {\tt Plasmo.jl} \cite{plasmo} and solved with the parallel interior-point solver {\tt PIPS-NLP} \cite{pipsnlp} (which exploits the structure of the stochastic program at the linear algebra level) and with the off-the-shelf serial solver {\tt IPOPT}\cite{ipopt} (which treats the problem as a general NLP). Because of the size of the problem and because the wind turbine model is nonconvex, MINLP  formulations of CC-P  are computationally intractable. A conservative approach to solve this problem is to enforce the load constraint for all scenarios (almost surely).  The expected power using this approach is 3.5 MW.

Table \ref{tb:Wind} summarizes the performance of {\tt SigVaR-Alg}. The serial solver {\tt Ipopt} takes 0.8 hours to solve the CVaR-P while the parallel solver {\tt PIPS-NLP} requires 30 minutes using 23 computing cores. The expected power obtained with CVaR-P  is 3.548 MW and we have found this performance to be too conservative. In particular, although the CC \eqref{eq:probcons1} only requires $\max_{t\in\mathcal{T}}\, y_L(t,\Xi)\leq \bar{y}_L$ to hold with a probability of 0.5, the CVaR-P solution satisfies it with probability 0.748. From the solution CVaR-P we obtain $\gamma_\alpha=0.822$.  From Table \ref{tb:Wind} we also see that the SigVaR approximation becomes less conservative as we increase $\mu,\tau$ and that the objective value is progressively improved (power is maximized). After three iterations, {\tt SigVaR-Alg} solves SigVaR-P with $\mu=10$ and $\tau=4.52$ and achieves an expected power of 3.865 MW (an improvement of 8.9\% over CVaR-P). The probability of satisfying the maximum load threshold is reduced to 0.583. At a price of electricity of 30 \$/MWh, these cost savings obtained with SigVaR-P translate to around \$83,000 per year (for a single 5 MW wind turbine).  We can thus see that the economic benefits of reducing conservatism can be quite significant. 

\begin{table}[tbhp]
\caption{Performance of {\tt SigVaR-Alg} on wind turbine optimization problem with $\alpha = 0.5$.}
\label{tb:Wind}
\centering
\begin{footnotesize}
\begin{tabular}{|c|c|c|c|c|c|c|c|}
\hline
$\ell$
&$\mu$ 
&$\tau$
& $\varphi$   
& $\text{VaR}_{1-\alpha}(y_L^{max}(\Xi))$ 
& $\mathbb{P}\left\{y_L^{max}(\Xi)\leq \bar{y}_L\right\}$ 
& Time 
&{\tt Ipopt} \\
&&  &$(MW)$ & $(MNm)$ & &(Hour) & Iter\\
\hline
CVaR-P      &  -         & -              & 3.548         &47.85       &0.748  &0.8 &160  \\
1     &2.5          &1.44           &3.766       &49.56   &0.726  &2.9  &603  \\
2     &5.0          &2.47            &3.835       &52.12  &0.643  &1.2 &238\\ 
3     &10.0        &4.52            &3.865       &54.52  &0.583 &1.3 &256\\
\hline		
\end{tabular}
\end{footnotesize}
\end{table}

Figure \ref{fig:Load} shows the cost distribution for the maximum load obtained with the CVaR-P and SigVaR-P. It is clear that CVaR is significantly more conservative and pushes the mechanical load towards small values. SigVaR, on the other hand, allows for an equal proportion of load violations and with this it can extract more power. This is illustrated in Figure \ref{fig:Power}, where we show that SigVaR achieves a larger proportion of scenarios with a large power output.

\begin{figure}[!htp]
\centering
\subfloat{\includegraphics[width=0.5\textwidth]{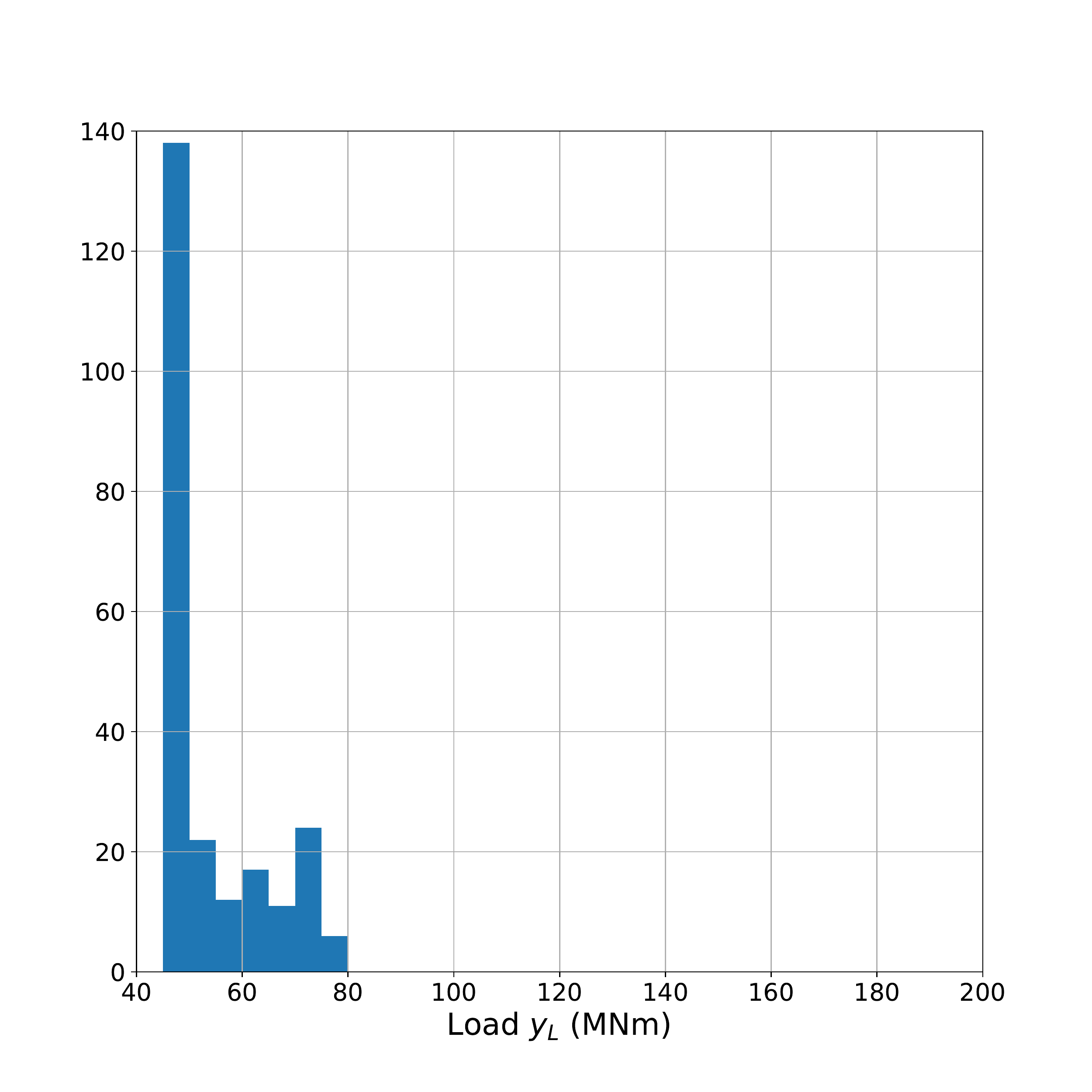}\label{fig:CVaRLoad}}
\subfloat{\includegraphics[width=0.5\textwidth]{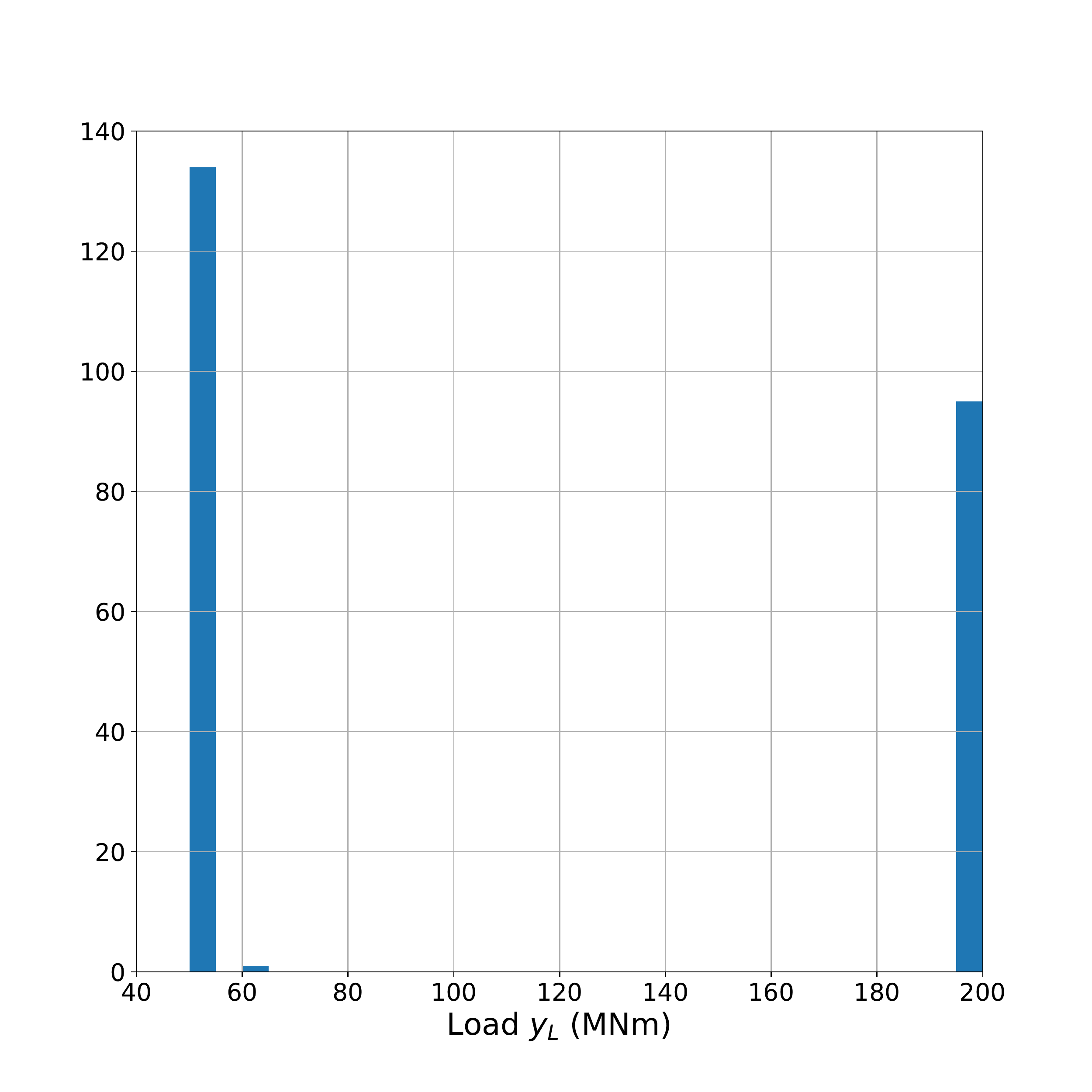}\label{fig:SigVaRLoad}}
\caption{Histogram of mechanical load using CVaR (left) and SigVaR (right) formulation.}	
\label{fig:Load}
\end{figure}

\begin{figure}[!htp]
\centering
\subfloat{\includegraphics[width=0.5\textwidth]{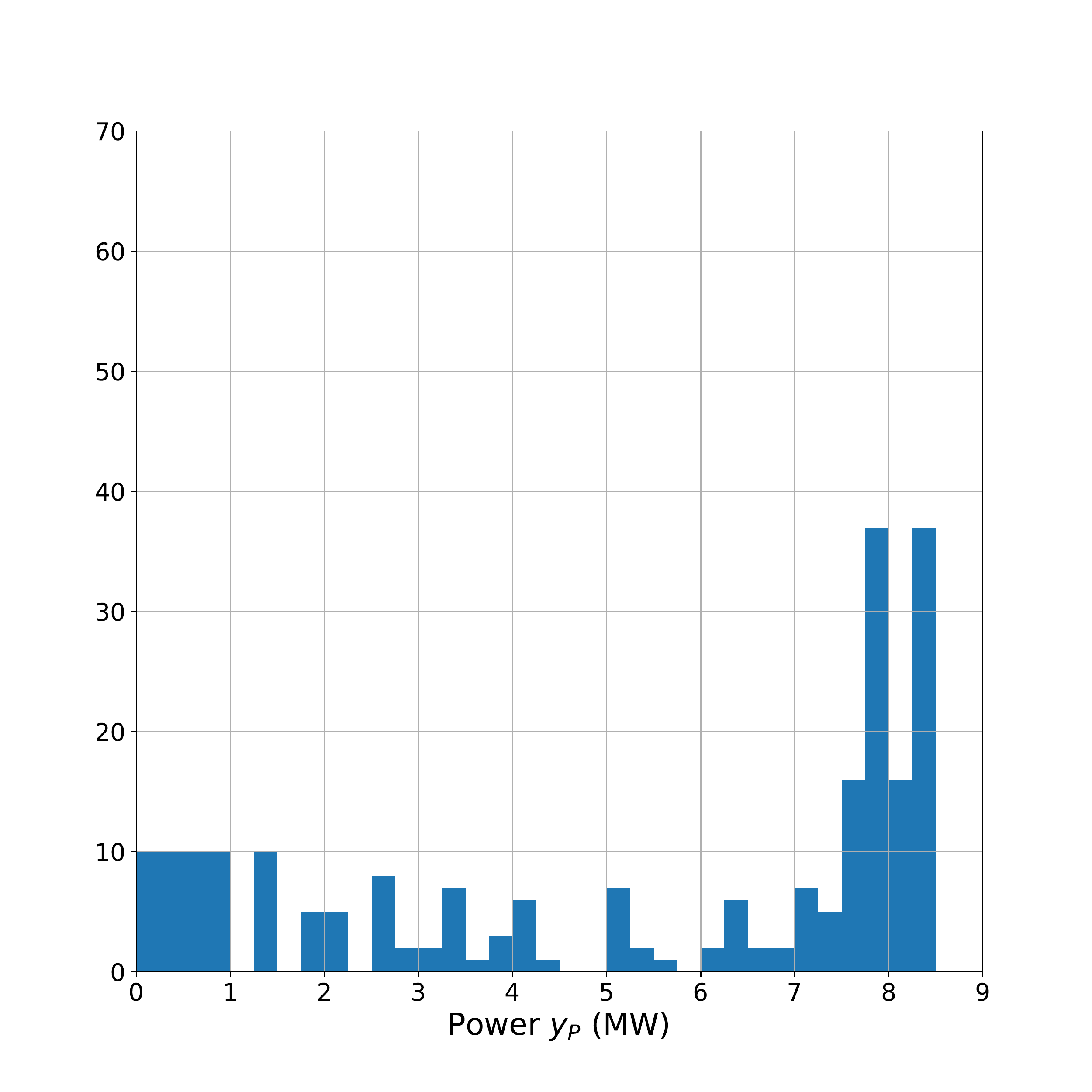}\label{fig:CVaRPower}}
\subfloat{\includegraphics[width=0.5\textwidth]{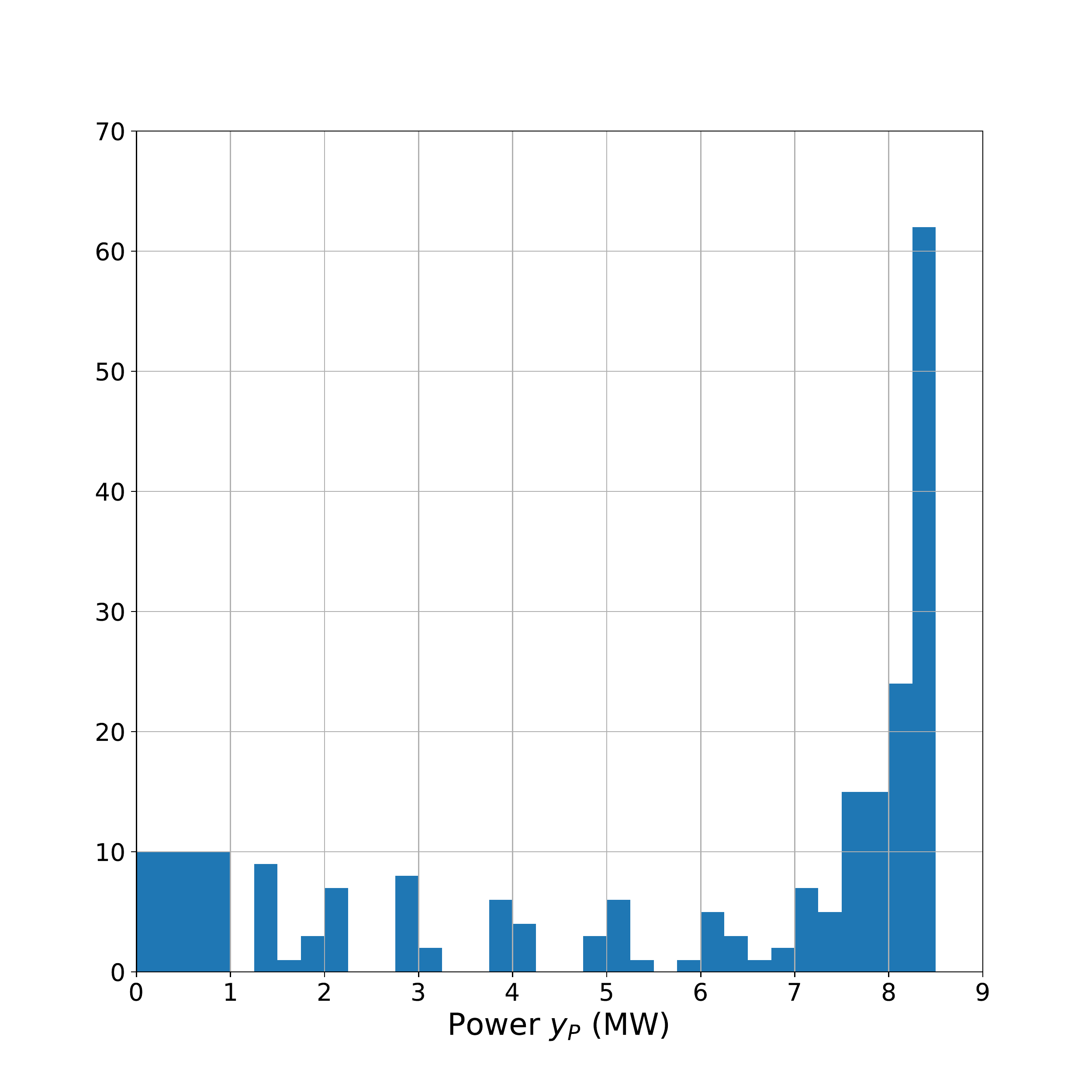}\label{fig:SVaRPower}}
\caption{Histogram of wind turbine power using CVaR (left) and SigVaR (right) formulation.}	
\label{fig:Power}
\end{figure}

Table \ref{tb:Wind_smooth} summarizes the performance of smooth sigmoidal approximation SS-P. Here we set $m_1=1$ and $m_2=0.5$, the same as the Figure 1 in  \cite{geletu2015tractable}. We tried 10 different values of $\rho$. When $\rho \geq 6.25$, the approximation is too conservative and there is no feasible solution. When $\rho=0.195$, {\tt IPOPT} has numerical difficulty in solving the problem. The best expected power obtained with smooth approximation is better than the solution from CVaR-P, but worse than the solution from {\tt SigVaR-Alg}. These results highlight the importance of having an explicit connection between SigVaR-P and CVaR-P and with this obtain an  initial guess for the parameter values. 

When $\alpha=0.05$, the expected power obtained with CVaR-P  is 3.5 MW, which is the same as the expected power obtained by forcing the inequality constraint to hold for all scenarios. Both SigVaR-P and SS-P cannot further improve the performance.

\begin{table}[tbhp]
\caption{Performance of SS-P on wind turbine optimization problem with $\alpha = 0.5$.}
\label{tb:Wind_smooth}
\centering
\begin{footnotesize}
\begin{tabular}{|c|c|c|c|c|c|c|c|}
\hline
$\ell$
&$\rho$ 
& $\varphi$   
& $\text{VaR}_{1-\alpha}(y_L^{max}(\Xi))$ 
& $\mathbb{P}\left\{y_L^{max}(\Xi)\leq \bar{y}_L\right\}$ 
& Time 
&{\tt Ipopt}\\
& &$(MW)$ & $(MNm)$ &  &(hr) & Iter\\
\hline
1     &100                      &  -     &   -      &  -      &   -          &  -      \\
2     &50                        &  -     &  -      &  -      &   -          &  -    \\ 
3     &25                        &  -     &  -      &  -      &   -          &  -    \\ 
4     &12.5                     &  -     &  -      &  -     &    -         &  -     \\
5     &6.25                     &  -     &  -      &  -     &    -         &  -    \\
6     &3.125                    &1.538       &5.53  & 1.0   &2.1 &461\\
7     &1.563                  &2.909       &30.79  & 1.0   &2.5 &553\\
8     &0.781                &3.427      &34.02  & 0.778 &5.2 &1138\\
9     &0.390               &3.749       &42.55  & 0.669 &2.2   &464\\
10     &0.195                &-         &-  & -  &-  & - \\
\hline		
\end{tabular}
\end{footnotesize}
\end{table}

\subsection{Flare System Optimization}

We consider the design of a flare stack system that combusts a random waste fuel gas flow.  Gas flares are used as safety (relief) devices to manage abnormal situations in infrastructure systems (natural gas and oil processing plants and pipelines), manufacturing facilities (chemical plants, offshore rigs), and power generation facilities. Abnormal situations include equipment failures, off-specification products, and excess materials in start-up/shutdown procedures. In particular, flares prevent over-pressuring of equipment and use combustion to convert flammable, toxic or corrosive vapors to less dangerous compounds \cite{epa2017}. Flare design is influenced by several uncertain factors such as the amount and composition of the waste flow stream to be combusted and the ambient conditions. These systems are currently designed based on typical historical values for waste fuel gases and ambient conditions \cite{api1997521,epa2017}. Consequently,  an improperly designed flare can be susceptible to extreme events not experienced before.  The design goals are to minimize capital cost while controlling the radiation level at ground level (which is a function of the input waste flow to be combusted). 

The heat released by combustion $H$ (BTU/h) is a function of the random input waste flow $Q$ (lb/h) and the heat of combustion $h_c$ (BTU/lb):
\begin{equation} \label{eq:heat}
H =h_c\,Q
\end{equation}
The flame length $L$ (ft) can be calculated as a function of the released heat using an approximation of the form:
\begin{equation} \label{eq:flame}
\log L=a_1\log H-a_2
\end{equation}
The flare stack diameter $t$ (ft) is sized on a velocity basis. This is done by relating this to the Mach number $M$ and the waste flow as:
\begin{equation} \label{eq:match}
M^2=\frac{a_3}{t^2} Q^2.
\end{equation}
The flare tip exit velocity $U$ (ft/s) is function of the flow and the diameter:
\begin{equation} \label{eq:vel}
U=a_4 \,\frac{Q}{t^2}
\end{equation}
The wind speed $w$ (ft/s) is an important environmental factor that affects the tilting of the flame and the distance from the centre of the flame. The following correlations capture the flame distortion as a result of the wind speed and the exit velocity:
\begin{equation} \label{eq:delx}
\log\Delta X=\log(a_5\, L )+a_6\,(\log w-\log U)
\end{equation}
\begin{equation} \label{eq:dely}
\log\Delta Y=\log(a_7\, L )-a_{8}\,(\log w-\log U)
\end{equation}
Here, $\Delta X$ and $\Delta Y$ (ft) are the horizontal and vertical distortions. The distortions are used to compute the horizontal $X$, vertical $Y$, and total distance $D$ (ft) to a given ground-level safe point $(r,0)$ as:
\begin{align}\label{eq:diam}
X&=r-\frac{1}{2}\,\Delta X\\
Y&=h+\frac{1}{2}\,\Delta Y\\
D^2&={X}^2+{Y}^2.
\end{align}
Here, $h$ (ft) is the flare height. The flame radiation $K$ (BTU/h\,ft${}^2)$ is a function of the heat released and the total distance:
\begin{equation}\label{eq:rad}
K=a_9\frac{H }{D^2}.
\end{equation}
A primary safety goal in the flare stack design problem is to control the risk that the radiation exceeds a certain threshold value $\bar{k}$ (BTU/h\,ft${}^2$) at the ground-level reference point $(r,0)$. This is modeled using the CC:
\begin{align}\label{eq:cck}
\mathbb{P}(K\leq \bar{k})\geq 1-\alpha.
\end{align}
The objective function is the cost (USD), which is a function of height and diameter:
\begin{equation}\label{eq:cost}
\varphi(t,h)=(a_{10}+a_{11}\,t+a_{12}\,h)^2.
\end{equation}
The height and the diameter are key design parameters that control the radiation experienced at the reference point (i.e., a higher and wider flare reduces the radiation intensity). As a result, there is an inherent trade-off between capital cost and safety that needs to be carefully handled. The overall goal of the optimization problem is to determine the optimal value of the height and diameter. We assume that the random input waste flow follows an exponential distribution (with a rate parameter  21, 000 lb/h). We generate 2000 scenarios from this distribution and we set $\alpha=0.05$. The total number of variables in the NLPs is on the order of 18,000.  A  {\tt Julia} model implementation along with all necessary data and parameters (e.g. $a_1-a_{12}$) is available at \url{https://github.com/zavalab/JuliaBox/tree/master/FlareDesignSigVaR}. 

A conservative solution is first obtained by enforcing radiation constraint for all scenarios. The cost associated with this approach is \$ 149,284. Table \ref{tb:Flare} summarizes the performance of {\tt SigVaR-Alg}. The cost obtained with CVaR-P  is \$ 121,170. Although this approximation has reduced the cost by 18.8\% compared with the scenario approach, this performance is still too conservative. In particular, although the CC only requires $K(\Xi)\leq \bar{k}$ to hold with a probability of 0.95, the CVaR-P solution satisfies it with probability 0.979. From the solution CVaR-P we obtain $\gamma_\alpha=0.0026$.  From Table \ref{tb:Flare} we also see that the SigVaR approximation becomes less conservative as we increase $\mu,\tau$ and that the objective value is progressively improved. After eight iterations, {\tt SigVaR-Alg} solves SigVaR-P with $\mu=320$ and $\tau=0.42$ and reduced the cost to \$ 109,488, which is of 9.6\% lower than the cost of CVaR-P. The probability of satisfying chance constraint is reduced to 0.951. We can thus see that the economic benefits of reducing conservatism can be quite significant. 

Table \ref{tb:Wind_smooth} summarizes the performance of the smooth SS-P approximation. For the first 4 iterations, the cost does not monotonically decrease as we increase the value of $\rho$. This might be due to the fact that there are multiple local optimal solutions. When $\rho \leq 0.39$, {\tt IPOPT} has numerical difficulty in solving the problem. The cost obtained with this  approximation is 3.8\% lower than the solution obtained with CVaR-P, but 6.4\% higher than the cost of {\tt SigVaR-Alg}.

\begin{table}[tbhp]
\caption{Performance of {\tt SigVaR-Alg} on flare system optimization with $\alpha = 0.05$.}
\label{tb:Flare}
\centering
\begin{footnotesize}
\begin{tabular}{|c|c|c|c|c|c|c|c|}
\hline
$\ell$
&$\mu$ 
&$\tau$
&Cost  
& $\text{VaR}_{1-\alpha}(K(\Xi))$ 
& $\mathbb{P}\left\{K(\Xi)\leq \bar{k}\right\}$ 
& Time 
&Ipopt \\
& &  &(USD) &(Btu/(hr $\text{ft}^2$)) &   &(sec) & Iter\\
\hline
CVaR-P&- & -  &121,170     &1612      &0.979   &20      &143   \\
1     &2.5         &0.0045   &118,176     &1687   &0.975   &3  &37  \\
2     &5.0         &0.0079   &115,893    &1767   &0.971    &4  &52\\
3     &10.0        &0.0144   &113,815   &1833  &0.965    &4 &49\\
4     &20.0	      &0.0275    &112,258  &1885      &0.962   &6      &65   \\
5     &40.1		 &0.0537	 &111,134   &1923      &0.957   &9      &97   \\
6     &80.2		 &0.1061    &110,328   &1952      &0.954   &15      &121   \\
7     &160		 &0.211    &109,780    &1971      &0.951   &104     &512   \\
8     &320		 &0.420    &109,488   &1982      &0.951    &45      &439   \\
\hline		
\end{tabular}
\end{footnotesize}
\end{table}

\begin{table}[tbhp]
\caption{Performance of SS-P on flare system optimization with $\alpha = 0.05$.}
\label{tb:Flare_smooth}
\centering
\begin{footnotesize}
\begin{tabular}{|c|c|c|c|c|c|c|}
\hline
$\ell$
&$\rho$ 
&Cost  
& $\text{VaR}_{1-\alpha}(K(\Xi))$ 
& $\mathbb{P}\left\{K(\Xi)\leq \bar{k}\right\}$ 
& Time 
&Ipopt \\
&  &(USD) &(Btu/(hr $\text{ft}^2$)) &   &(sec) & Iter\\
\hline
1     &100              &138,865         &1214      &0.997    &119       &708  \\
2     &50           	   &141,880         &1161      &0.998    &58       &445\\
3     &25          	   &142,004         &1159      &0.998    &106       &698\\
4     &12.5	            &135,526         &1277      &0.996    &150       &951  \\
5     &6.25 		 	    &131,540         &1359      &0.993    &80       &723 \\
6     &3.125		    &126,018         &1486      &0.988    &85       &565\\
7     &1.563 		    &122,023         &1589      &0.981    &9      &78\\
8     &0.781		    &116,472         &1749      &0.972    &5       &35\\
9     &0.390		    &-         &-      &-    &-       &- \\
10   &0.195	        &-         &-      &-    &-       &- \\
\hline		
\end{tabular}
\end{footnotesize}
\end{table}

\section{Concluding Remarks}\label{sec:final}
We have proposed a sigmoidal approximation for chance constraints that we call SigVaR. We prove that SigVaR is conservative and that the level of conservatism can be made arbitrarily small for limiting values of the approximation parameters. We also provide conditions for the parameters guaranteeing that the SigVaR approximation is less conservative than the conditional value at risk (CVaR) approximation and other smooth sigmoidal approximations available in the literature. The SigVar approximation brings computational benefits over mixed-integer reformulations because its sample average approximation can be formulated as a standard nonlinear program. We also conduct numerical experiments to demonstrate that it can significantly reduce the conservatism of CVaR. A limitation of SigVaR, however, is that numerical instability is encountered for limiting parameter values. To ameliorate this issue, we proposed an algorithmic scheme that solves a sequence of approximations of increasing quality. This scheme exploits connections between the parameter values of SigVaR and the VaR detected with the CVaR approximation. As part of future work, we are interested in studying more closely the behavior of the sigmoidal approximation from numerical stand-point. In particular, while the proposed scheme does improve numerical performance, extreme sensitivity of the sigmoidal function for large parameter values remains an issue.  

\bibliographystyle{spmpsci}      % mathematics and physical sciences
\bibliography{chance.bib}
\end{document}